\newcommand{\Size}[1]{\left\lvert #1 \right\rvert}
\newcommand{\Span}[1]{\left\langle#1\right\rangle}
\newcommand{\Set}[1]{\left\lbrace #1 \right\rbrace}
\let\phi\varphi
\theoremstyle{plain}
\newtheorem{theorem}{Theorem}[section]
\newtheorem{lemma}[theorem]{Lemma}
\newtheorem{proposition}[theorem]{Proposition}
\newtheorem{corollary}[theorem]{Corollary}
\theoremstyle{remark}
\newtheorem{remark}{Remark}
\theoremstyle{definition}
\newtheorem{definition}[theorem]{Definition}
\newtheorem{example}[theorem]{Example}
\newtheorem*{notation*}{Notation}
\lstdefinelanguage{GAP}{%
 morekeywords={%
 Assert,Info,IsBound,QUIT,%
 TryNextMethod,Unbind,and,break,%
 continue,do,elif,%
 else,end,false,fi,for,%
 function,if,in,local,%
 mod,not,od,or,%
 quit,rec,repeat,return,%
 then,true,until,while%
 },%
 sensitive,%
 morecomment=[l]\#,%
 morestring=[b]",%
 morestring=[b]',%
}[keywords,comments,strings]
\DeclareMathOperator{\Z}{\mathbb{Z}}
\DeclareMathOperator{\Part}{\mathcal{P}}
\DeclareMathOperator{\Lie}{\mathfrak{L}}
\DeclareMathOperator{\Sym}{Sym}
\DeclareMathOperator{\wt}{wt}
\DeclareMathOperator{\WD}{wd}
\DeclareMathOperator{\lev}{lev}
\DeclareMathOperator{\per}{\nu}
\newenvironment{nouppercase}{%
  \renewcommand{\uppercasenonmath}[1]{}}{}
\begin{document}

\title{A definitely periodic chain in the integral Lie ring of partitions} 
 \author[R.~Aragona]{Riccardo Aragona}
\author[R.~Civino]{Roberto Civino}
\author[N.~Gavioli]{Norberto Gavioli}

\address{DISIM \\
 Universit\`a degli Studi dell'Aquila\\
 via Vetoio\\
 I-67100 Coppito (AQ)\\
 Italy}       

\email[R.~Aragona]{riccardo.aragona@univaq.it}
\email[R.~Civino]{roberto.civino@univaq.it} 
\email[N.~Gavioli]{norberto.gavioli@univaq.it}

\date{} \thanks{All the authors are members of INdAM-GNSAGA
 (Italy). R. Civino is funded by the Centre of excellence
 ExEMERGE at University of L'Aquila, with which also the other authors collaborate.\\
   \emph{Data availability:} all data generated or analyzed during this study are included in this published article.}

\subjclass[2010]{17B70; 17B60; 20D20; 05A17} \keywords{Integer partitions; Normalizer chain; Lie rings.}

\begin{abstract}
Given an integer $n$, we introduce the integral Lie ring of partitions with bounded maximal part, whose 
elements are in one-to-one correspondence to integer partitions with parts in $\{1,2,\dots, n-1\}$. 
Starting from an abelian subring, we recursively define a chain of idealizers and we prove that the sequence of
ranks of consecutive terms in the chain is definitely periodic. Moreover, we show that its growth depends of the partial sum of the partial sum of the sequence
counting the number of partitions.
This work generalizes our previous recent work on the same topic, devoted to the modular case where 
partitions were allowed to have a bounded number of repetitions of parts in a ring of coefficients of positive characteristic.
\end{abstract}
\begin{nouppercase}
\maketitle
\end{nouppercase}

\section{Introduction}
Given an integer $n\geq 3$, we recently defined a Lie ring structure on the set of partitions with parts in $\{1,2,\dots,n-1\}$ and where each part is allowed to have at most $m-1$ repetitions, for some given $m > 2$. In the obtained structure, here called $\Lie_m(n)$, we  recursively defined a chain of idealizers $(\mathfrak N_i)_{i \geq -1}$ starting from an abelian subring $\mathcal T$. 
We proved that the rank of \(\mathfrak{N}_{i}/\mathfrak{N}_{i-1}\) as free $\Z_m$-module is $q_{i+1}$, where $q_i$ is the partial sum of sequence counting the number of partitions of $i$ into at least two parts, each allowed to be repeated at most $m-1$ times~\cite{Aragona2023}. 
This was done to address the problem,  proposed by Aragona et al.\ in 2021~\cite{aragona2021rigid} and left unsolved, of computing the growth of a chain of normalizers in a Sylow $m$-subgroup starting from an elementary abelian regular subgroup $T <\Sym(m^n)$, when $m$ is an odd prime. Indeed, 
 $\Lie_m(n)$ was constructed as the iterated wreath product of Lie rings of rank one~\cite{Aragona2023} and, when $m$ is prime, this corresponds exactly to the construction of the graded Lie algebra associated to the lower central series of a Sylow $m$-subgroup of $\Sym(m^n)$~\cite{MR2148825}. In this construction, the abelian subring $\mathfrak{N}_{-1} = \mathfrak T$ at the base of the idealizer chain corresponds to the elementary abelian regular subgroup $T$ at the base of normalizer chain. It is important to stress that the combinatorial equality mentioned above is valid only for the first $n-2$ terms of the chain, and the problem of understanding the general behavior of the chain is out of reach at the time of writing. 

In this work we address a similar problem in the case of characteristic zero, i.e.\ in a Lie ring $\Lie(n)$ with integer coefficients of partitions with parts in $\{1,2,	\dots, n-1\}$. The main combinatorial difference between the two settings is that now each part is allowed to have an unbounded number of repetitions. We show that this significantly affects how  the idealizer chain grows. In particular, we prove here that the sequence of consecutive quotient ranks in the idealizer chain depends on the \emph{second partial sum} of the sequence of the integer partitions and is (definitely) periodic.

We conclude this section by introducing the construction of the partition Lie ring with integer coefficients and by showing some preliminar properties.

\subsection{Notation and preliminaries}
The construction of the integral ring of partitions presented below is based on the construction of the 
\emph{Lie ring over $\Z_m$ of partitions with bounded maximal part}~\cite{Aragona2023}, here customized for \(m=0\).
We advise the reader to also refer to the original paper for further details and to the papers for the original problem
in the setting of the symmetric group~\cite{Aragona2019,aragona2021rigid}.
		
Let  \(\Lambda   =  \{\lambda_i\}_{i=1}^\infty\)  be  a   sequence  of
non-negative integers with finite support, i.e.\ such that
\[\wt(\Lambda)= \sum_{i=1}^\infty i\lambda_i < \infty.\] The sequence
\(\Lambda\) defines  a \emph{partition} of \(N  = \wt(\Lambda)\). Each
non-zero  $i$  is   a  \emph{part}  of  the   partition,  and the  integer
\(\lambda_i\) is the multiplicity of the part \(i\) in \(\Lambda\).  The maximal  part of
\(\Lambda\) is the  number \(\max\bigl(\{i  \mid \lambda_i  \ne 0\}\bigr)\). The set
of the  partitions whose maximal part  less than or equal to \(j\)  is denoted by
\(\Part(j)\).

The  \emph{power  monomial}  \(x^\Lambda\),  where  \(\Lambda\)  is  a
partition, is defined as \(x^\Lambda= \prod_{i} x_i^{\lambda_i}\).  
Given a positive integer \(n\) and denoting by  \(\partial_k\)  the derivation given by the standard  partial
derivative with respect to $x_k$, where $1 \leq k \leq n$, we
define by \(\Lie(n)\) the free \(\Z\)-module spanned by the basis
\[\mathcal{B}:=\Set{x^\Lambda\partial_k \mid 1\le k \le  n \text{ and }
    \Lambda\in            \Part(k-1)},\]           and  we            set
\(\mathcal{B}_u:= \Set{x^\Lambda\partial_k \in \mathcal{B} \mid k=u}\).
The module \(\Lie(n)\)  is endowed with a structure of  Lie ring, where the
Lie bracket is defined on the basis \(\mathcal{B}\) by
\begin{eqnarray*}
  \left[x^\Lambda \partial_k , x^\Theta \partial_j\right] :=&
                                                             \partial_{j}(x^\Lambda)  x^\Theta \partial_k - x^\Lambda \partial_{k}(x^\Theta) \partial_j \nonumber \\
  =&
     \begin{cases}
       \partial_{j}(x^\Lambda)  x^\Theta \partial_k & \text{if \(j<k\)}, \\
       - x^\Lambda \partial_{k}(x^\Theta) \partial_j & \text{if \(j>k\)},\\
       0 & \text{otherwise.}
     \end{cases}
\end{eqnarray*} 
 This operation  is then extended to a Lie  product on  \(\Lie(n)\)  by bilinearity, and the resulting structure \(\Lie(n)\) is called the
  \emph{integral Lie ring of partitions with parts in $\{1,2,\dots, n-1\}$}.
 
In the remainder of the paper we deal with homogeneous  subrings of  \(\Lie(n)\), which are defined as follows.
\begin{definition}
 A Lie  subring \(\mathfrak{H}\) of 
\(\Lie(n)\) is said to be \emph{homogeneous}  if it is the free \(\Z\)-module
spanned by some subset \(\mathcal{H}\) of \(\mathcal{B}\). 
\end{definition}
The following result on homogeneous subrings can be proved as in the case of the modular ring $\Lie_m(n)$~\cite[Theorem 2.5]{Aragona2023}.
Here 
and the remainder of the paper, if \(\mathcal{H}\) is a subset  of \(\mathcal{B}\), then its \emph{idealizer}
is defined as
\[  N_\mathcal{B}(\mathcal{H}):=  \Set{b\in  \mathcal{B} \mid  [b,h] \in
    \Z\mathcal{H} \text{ for all } h\in \mathcal{H}  }.
\]

\begin{theorem}\label{thm_homogeneous:normalizers}
  Let  \(\mathfrak{H}\) be  a homogeneous  subring of  \(\Lie\) having
  basis  \(\mathcal{H}  \subseteq  \mathcal{B}\).   The  idealizer  of
  \(\mathfrak{H}\)  in  \(\Lie(n)\)  is  the  homogeneous  subring  of
  \(\Lie(n)\)  spanned  by  \(N_\mathcal{B}(\mathcal{H})\) as  a  free
  \(\Z\)-module.
\end{theorem}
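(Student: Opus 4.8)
The plan is to reduce the statement to a single rigidity property of the bracket on basis elements. The first and only substantial step is to prove that for all $b,b'\in\mathcal B$ and all $h\in\mathcal B$ the bracket $[b,h]$ is either $0$ or an integer multiple of a single element of $\mathcal B$, and moreover that if $[b,h]$ and $[b',h]$ are both nonzero multiples of the \emph{same} basis element then $b=b'$. The first half is read directly off the defining formula: writing $b=x^\Lambda\partial_k$ and $h=x^\Theta\partial_j$, one has $[b,h]=\lambda_j\,x^{\Lambda-e_j+\Theta}\partial_k$ if $j<k$, $[b,h]=-\theta_k\,x^{\Lambda+\Theta-e_k}\partial_j$ if $j>k$, and $[b,h]=0$ if $j=k$, where $e_i$ denotes the one-part partition $i$; in the first two cases the exponent partition has all parts strictly below the derivation index of the output, so the output is indeed an integer multiple of an element of $\mathcal B$. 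For the injectivity half one compares first the derivation index of the output and then its monomial part; the single configuration that needs genuine attention is a pair $x^\Lambda\partial_k$, $x^{\Lambda'}\partial_{k'}$ with $k\ne k'$, both smaller than $j$, and $\Lambda-e_k=\Lambda'-e_{k'}$, which is impossible because $x^\Lambda\partial_k\in\mathcal B$ forces $\lambda_k=0$, so $\Lambda-e_k$ would have a negative entry. This is where the defining constraint of $\mathcal B$ is essential, and the argument is entirely parallel to \cite[Theorem~2.5]{Aragona2023}.

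Granting this, one inclusion of the theorem is immediate: if $b\in N_\mathcal{B}(\mathcal{H})$ then $[b,h]\in\Z\mathcal H=\mathfrak H$ for all $h\in\mathcal H$, hence $[b,\mathfrak H]\subseteq\mathfrak H$ by bilinearity, so the $\Z$-span of $N_\mathcal{B}(\mathcal{H})$ is contained in the idealizer $\mathfrak N$ of $\mathfrak H$. For the reverse inclusion I would take $x=\sum_{b\in\mathcal B}c_b b\in\mathfrak N$, with $c_b\in\Z$ almost all zero, and fix $h\in\mathcal H$. In $[x,h]=\sum_b c_b[b,h]$ the nonzero summands $c_b[b,h]$ are, by the rigidity property, nonzero integer multiples of pairwise distinct basis elements, so no cancellation occurs and the support of $[x,h]$ in $\mathcal B$ is exactly this finite set of basis elements. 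Since $[x,h]\in\mathfrak H=\Z\mathcal H$, that support is contained in $\mathcal H$, hence $[b,h]\in\Z\mathcal H$ for every $b$ with $c_b\ne0$ (trivially so when $[b,h]=0$). As $h\in\mathcal H$ was arbitrary, every such $b$ lies in $N_\mathcal{B}(\mathcal{H})$, so $x$ belongs to its $\Z$-span; this gives $\mathfrak N=\Z\cdot N_\mathcal{B}(\mathcal{H})$.

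It then remains to observe that $\mathfrak N$ is a homogeneous subring. It is visibly a $\Z$-submodule, and it is closed under the bracket by the standard idealizer argument: for $x,y\in\mathfrak N$ and $h\in\mathfrak H$ the Jacobi identity gives $[[x,y],h]=[x,[y,h]]-[y,[x,h]]\in\mathfrak H$, so $[x,y]\in\mathfrak N$. Being spanned over $\Z$ by the subset $N_\mathcal{B}(\mathcal{H})$ of $\mathcal B$, it is homogeneous by definition. I expect the rigidity property of the bracket — and within it the collision case highlighted above — to be the only real obstacle; the rest of the argument is formal.
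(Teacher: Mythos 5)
Your proof is correct. The paper does not supply an argument here; it merely cites the analogous statement for the modular ring \(\mathfrak{L}_m(n)\) (\cite[Theorem~2.5]{Aragona2023}) and asserts the proof carries over, so you have in effect reconstructed that argument in the integral setting. The structure you give is exactly the right one: the ``rigidity'' claim (each \([b,h]\) is an integer multiple of a single basis element, and distinct \(b\) yield distinct basis elements for fixed \(h\)) forces the support of \([x,h]\) to split term by term, and the rest is the formal idealizer computation via the Jacobi identity. Your analysis of the one genuine collision configuration — \(j>k,k'\), \(k\ne k'\), \(\Lambda-e_k=\Lambda'-e_{k'}\) — correctly exploits that \(\Lambda\in\Part(k-1)\) forces \(\lambda_k=0\), so \(\Lambda-e_k\) has a \(-1\) at position \(k\) while \(\Lambda'-e_{k'}\) is non-negative there; this is precisely where the constraint defining \(\mathcal B\) does its work. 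One small point worth making explicit (you gloss over it): the bracket of two basis elements really does land in \(\Z\mathcal B\), i.e.\ \(\Lambda-e_j+\Theta\in\Part(k-1)\) when \(j<k\) and \(\Lambda+\Theta-e_k\in\Part(j-1)\) when \(j>k\), which follows because \(\Part(j-1)\subseteq\Part(k-1)\) in the first case and \(\Part(k-1)\subseteq\Part(j-1)\) in the second.
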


The chain of idealizers in \(\Lie_m(n)\) started from the abelian homogeneous Lie  subring \(\mathfrak{T}=\Span{\partial_1,\dots,\partial_n}\)~\cite{Aragona2023}. Here we deal with the idealizer chain defined in $\Lie(n)$ starting from $\mathfrak{T}$ and defined as follows:
\begin{equation}\label{def_chain}
\mathfrak{N}_i = 
\begin{cases}
\mathfrak{T} & i=-1,\\
N_{\Lie(n)}(\mathfrak{T}) & i=0,\\
N_{\Lie(n)}(\mathfrak{N}_{i-1}) & i \geq 1.
\end{cases}
\end{equation}

\subsection{Organization of the paper}
The remainder of the paper is organized as follows:
Sec.~\ref{sec:comb} is devoted to combinatoric aspects of $\Lie(n)$: we introduce a chain of subsets $(\mathcal N_i)_{i \ge -1}$ and we show 
that the cardinalities of $\mathcal{N}_i\setminus\mathcal{N}_{i-1}$ depend, \emph{up to periodicity}, on the second partial sum of the sequence of integer partitions (cf.\ Corollary~\ref{cor:lev_sizes} and Corollary~\ref{cor:main}).
In Sec.~\ref{sec:norm}, we show (cf.\ Theorem~\ref{thm:main}) that  the free \(\Z\)-modules spanned by
	the sets \(\mathcal{N}_i\)s coincide with the idealizers of Eq.~\eqref{def_chain}, yielding our main contribution of Corollary~\ref{coro:main2}, which connects, up to periodicity, the rank of the   free    \(\Z\)-modules
  \(\mathfrak{N}_i/\mathfrak{N}_{i-1}\) with the second partial sum of the sequence of integer partitions.
  

\section{Combinatorics of the integral Lie ring of partitions}\label{sec:comb}
The techniques developed for the case of positive characteristic do not fit well in the case of the integral Lie ring of partitions, which requires new combinatorial tools.
As we will show in the remainder of this paper, the combinatorial properties of \(\Lie(n)\) depend on the behavior of the functions introduced below.
\begin{definition}\label{def:indices}
Let \(i\ge -1\) be an integer and   let  \(1\le r_i  \le n-1\)  be such  that
\(i\equiv r_i \bmod (n-1)\). Write
\begin{equation*}
  i=(h_i-1)(n-1)+r_i,
\end{equation*}
where        \begin{equation*}
h_i:=\left\lfloor
    \dfrac{i-1}{n-1}\right\rfloor+1.
\end{equation*} 
The         \emph{weight-degree         function} is defined as
\(\WD\colon \Z\mathcal{B} \to \Z \) by
\begin{equation*}
  \WD(c \cdot x^\Lambda\partial_{k}) := \wt(\Lambda)-\deg(x^\Lambda)+n-k,
\end{equation*} 
and 
 the  \emph{\(i\)-th  level  function}
\(\lev_i\colon \Z\mathcal{B} \to \Z \) by
\begin{equation*}
  \lev_i(c \cdot x^\Lambda\partial_{k}) := h_i\WD(\Lambda)+\deg(x^\Lambda)-1.
\end{equation*}
\end{definition}
We  now show  that the weight-degree  function   is
  bounded.
\begin{lemma}   \label{lem:bounds}  
Let $x^\Lambda\partial_{k} \in \mathcal{B}$. Then 
 \(\WD(x^\Lambda\partial_{k}) \geq n-k\) 
  and 
  \(\WD(x^\Lambda\partial_{k})      =     n-k\) if and only if \(x^\Lambda\partial_{k}=x_1^{\lambda_1}\partial_k\).
   Moreover, assume that
  \(\lev_i(x^\Lambda\partial_{k})\le   i\)   for  some   \(i\).  Then
  \(\WD(x^\Lambda\partial_{k})         \leq         n-1\)         and
  \(\WD(x^\Lambda\partial_{k}) = n-1\) if and only if
  \(x^\Lambda\partial_k=\partial_1\).
\end{lemma}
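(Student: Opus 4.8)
The plan is to reduce everything to the elementary identity
\[
\WD(x^\Lambda\partial_k)=\bigl(\wt(\Lambda)-\deg(x^\Lambda)\bigr)+(n-k)=\sum_{i\ge 1}(i-1)\lambda_i+(n-k),
\]
in which each summand $(i-1)\lambda_i$ is non-negative and vanishes exactly when $\lambda_i=0$. The first assertion is then immediate: $\WD(x^\Lambda\partial_k)\ge n-k$, and equality holds iff $\sum_{i\ge 1}(i-1)\lambda_i=0$, i.e.\ iff $\lambda_i=0$ for every $i\ge 2$, i.e.\ iff $x^\Lambda\partial_k=x_1^{\lambda_1}\partial_k$ (with $\lambda_1=0$ allowed, giving $x^\Lambda\partial_k=\partial_k$).

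For the ``moreover'' part I would first record two elementary facts about the integer $h_i$ of Definition~\ref{def:indices}, both read off from the floor function (using $n\ge 3$): one has $h_i\ge 0$ for every $i\ge -1$, and more precisely $h_i=0$ exactly for $i\in\{-1,0\}$, while $h_i\ge 1$ for $i\ge 1$. Writing $W:=\WD(x^\Lambda\partial_k)$ (the quantity denoted $\WD(\Lambda)$ in the definition of $\lev_i$) and $d:=\deg(x^\Lambda)$, the hypothesis $\lev_i(x^\Lambda\partial_k)=h_iW+d-1\le i$, together with $i=(h_i-1)(n-1)+r_i\le h_i(n-1)$ (because $r_i\le n-1$), rearranges to
\[
h_i\bigl(W-(n-1)\bigr)\le 1-d.
\]

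I would then split on $d$. If $d=0$ then $x^\Lambda=1$, so $W=\WD(\partial_k)=n-k\le n-1$, with equality iff $k=1$, i.e.\ iff $x^\Lambda\partial_k=\partial_1$ (the hypothesis is not even needed here). If $d\ge 1$, I claim $W\le n-2$. Indeed, the displayed inequality has non-positive right-hand side; when $h_i\ge 1$ it forces $W\le n-1$, and if in addition $W=n-1$ it forces $d\le 1$, hence $d=1$; when $h_i=0$ (so $i\in\{-1,0\}$) the hypothesis reads $d-1\le i\le 0$, again forcing $d=1$. So the only case left is $d=1$, where $\Lambda$ is a single part $j$ with $1\le j\le k-1$ (as $\Lambda\in\Part(k-1)$), whence $W=(j-1)+(n-k)\le(k-2)+(n-k)=n-2$, proving the claim. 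Putting the two cases together, $W\le n-1$ in all cases, and $W=n-1$ forces $d=0$ and $k=1$, i.e.\ $x^\Lambda\partial_k=\partial_1$; the reverse implication is the trivial computation $\WD(\partial_1)=n-1$.

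The arithmetic here is routine, so there is no genuine obstacle; the points that require a little attention are the degenerate indices $i\in\{-1,0\}$ (equivalently $h_i=0$), where the inequality $h_i(W-(n-1))\le 1-d$ carries no information and one must instead extract $d\le 1$ straight from $\lev_i\le i$, and the bookkeeping in the equality case that singles out $\partial_1$ as the unique element with $\WD=n-1$ among those satisfying the hypothesis. I would regard this boundary-index analysis as the main, and quite mild, difficulty.
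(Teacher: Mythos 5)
Your proof is correct and takes essentially the same route as the paper: the first claim is handled identically, and for the ``moreover'' part both arguments combine $\lev_i(x^\Lambda\partial_k)\le i$ with $i=(h_i-1)(n-1)+r_i$ and $r_i\le n-1$ to force $\deg(x^\Lambda)\le 1$, and then dispose of the degree-$0$ and single-part cases by the elementary bound $\WD(x_j\partial_k)=(j-1)+(n-k)\le n-2$ for $j<k$. The only organizational difference is that you factor the constraint as $h_i\bigl(\WD-(n-1)\bigr)\le 1-\deg(x^\Lambda)$ and split on $\deg(x^\Lambda)$, while the paper argues by contradiction from $\WD\ge n-1$ and splits on $r_i\in\{n-2,n-1\}$; your explicit handling of the boundary indices with $h_i=0$ is a helpful clarification but not a different idea.
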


\begin{proof}
  Note                                                            that
  \(\WD(x^\Lambda\partial_k)=
  \left(\wt(\Lambda)-\deg(x^\Lambda)\right)  +  n-k   >  n-k\)  unless
  \(\wt(\Lambda)=\deg(x^\Lambda)\),    which    is    equivalent    to
  \(x^\Lambda\partial_{k}=x_1^{\lambda_1}\partial_k\).
	
 Let now  $i$ be such  that \(\lev_i(x^\Lambda\partial_{k})\le   i\) and assume   \(\WD(x^\Lambda\partial_{k})   \ge    n-1\). Then
  \begin{align*}
  i &\ge                \lev_i(x^\Lambda\partial_{k})       \\         
  &= h_i\WD(x^\Lambda\partial_{k})+\deg(x^\Lambda)-1            \\
        &\ge h_i(n-1)+\deg(x^\Lambda)-1                                         \\
  &= (n-1)+(h_i-1)(n-1)+r_i+\deg(x^\Lambda)-r_i-1\\
  &=i+(n-1)-r_i+\deg(x^\Lambda)-1.
  \end{align*}    This     implies     that
  \((n-1)-r_i+\deg(x^\Lambda)-1 \le 0\).  Since  \(r_i\le  n-1\),  this  is
  possible  only if  either \(r_i=n-1\)  and \(\deg(x^\Lambda) \le 1\)  or
  \(r_i=n-2\)   and   \(\deg(x^\Lambda)=0\).    
  
  In  the   first   case we have either \(k=1\) and hence \(x^\Lambda\partial_k=\partial_1\), as required, or 
  \(x^\Lambda=x_j^{\lambda_j}\),      where       \(1\le j<k\) and \(\lambda_j\le 1\).       This      implies
  \(n-1\le   \WD(x^\Lambda\partial_k)=   (j-1)\lambda_j+n-k \le j+n-k-1\)  and   consequently
  \(k \le j\),  which  contradicts \(j<k\).  
  
   In  the  second  case we have
  \(x^\Lambda\partial_k=\partial_k\)                               and
  \(n-1 \le  \WD(x^\Lambda\partial_k)=n-k\). This leads again to  \(k=1\) and
  \(x^\Lambda\partial_k=\partial_1\).
\end{proof}

\begin{proposition}\label{prop:lev_wd_equations}
  If  \(x^\Lambda\partial_k, x^\Theta\partial_u \in \mathcal B\)   do  not
  commute, then
  \begin{equation}\label{eq:horizontal_operation}
    \begin{split}
      \lev_i([x^\Lambda\partial_{k},          x^\Theta\partial_{u}])&=
      \lev_i(x^\Lambda\partial_{k})                                  +
      \lev_i(x^\Theta\partial_{u})-h_i(n-1)\\ 
    \end{split}
  \end{equation}
  and
  \begin{equation}\label{eq:wd_decreasing}
    \begin{split}
      \WD([x^\Lambda\partial_{k},             x^\Theta\partial_{u}])&=
      \WD(x^\Lambda\partial_{k}) + \WD(x^\Theta\partial_{u}) -(n-1).
    \end{split}
  \end{equation}	
  Moreover,     if     \(\lev_i(x^\Lambda\partial_k)\le    i\)     and
  \(\lev_j(x^\Theta\partial_u)\le j\) for some \(i,j\ge -1\), then
  \begin{equation}\label{eq:wd_decreasing}
    \begin{split}
      \WD([x^\Lambda\partial_{k},      x^\Theta\partial_{u}])      \le
      \min(\WD(x^\Lambda\partial_{k}) , \WD(x^\Theta\partial_{u})),
    \end{split}
  \end{equation}
  with    equality       if    and   only    if   one    of
  \(x^\Lambda\partial_{k}\)     or    \(x^\Theta\partial_{u}\)     is
  \(\partial_1\).
\end{proposition}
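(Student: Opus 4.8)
The plan is to reduce the whole statement to the explicit bracket formula together with Lemma~\ref{lem:bounds}. Since the Lie bracket is antisymmetric and both $\WD$ and $\lev_i$ are, by definition, insensitive to the scalar coefficient appearing in front of a basis element, we may assume without loss of generality that $u<k$: the case $u=k$ is excluded because the two elements then commute, and the case $u>k$ follows by swapping the two arguments. Under $u<k$ the definition of the bracket gives
\begin{equation*}
  [x^\Lambda\partial_k, x^\Theta\partial_u]=\lambda_u\,x^{\Lambda+\Theta-e_u}\partial_k,
\end{equation*}
where $e_u$ denotes the partition consisting of a single part equal to $u$ and $\lambda_u\neq 0$ precisely because the two elements do not commute. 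The first point to verify is that $M:=\Lambda+\Theta-e_u$ lies in $\Part(k-1)$: the parts of $\Lambda$ are at most $k-1$, those of $\Theta$ at most $u-1\le k-2$, and $u\le k-1$, so removing one part equal to $u$ leaves a partition with maximal part at most $k-1$. In particular the bracket is a nonzero scalar multiple of the single basis element $x^M\partial_k$, so $\WD$ and $\lev_i$ may legitimately be evaluated on it.

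Next I would record the two additivity identities $\wt(M)=\wt(\Lambda)+\wt(\Theta)-u$ and $\deg(x^M)=\deg(x^\Lambda)+\deg(x^\Theta)-1$, which hold because $x^M$ has exactly one fewer factor $x_u$ than $x^\Lambda x^\Theta$. Substituting these into the definitions of $\WD$ and $\lev_i$ and performing the elementary cancellation of the linear terms in $n$, $k$, $u$ gives $\WD([x^\Lambda\partial_k,x^\Theta\partial_u])=\WD(x^\Lambda\partial_k)+\WD(x^\Theta\partial_u)-(n-1)$ and then identity~\eqref{eq:horizontal_operation}; note that in the $\lev_i$ identity the factor $h_i$, which depends only on $i$ and $n$, is common to all three terms and factors out. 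This step is purely computational and is not where any difficulty lies.

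For the final inequality I would combine the identity $\WD([x^\Lambda\partial_k,x^\Theta\partial_u])=\WD(x^\Lambda\partial_k)+\WD(x^\Theta\partial_u)-(n-1)$ just obtained with Lemma~\ref{lem:bounds}: the hypotheses $\lev_i(x^\Lambda\partial_k)\le i$ and $\lev_j(x^\Theta\partial_u)\le j$ yield, through that lemma, $\WD(x^\Lambda\partial_k)\le n-1$ and $\WD(x^\Theta\partial_u)\le n-1$, hence
\begin{equation*}
  \WD([x^\Lambda\partial_k,x^\Theta\partial_u])-\WD(x^\Lambda\partial_k)=\WD(x^\Theta\partial_u)-(n-1)\le 0
\end{equation*}
and, symmetrically, $\WD([x^\Lambda\partial_k,x^\Theta\partial_u])-\WD(x^\Theta\partial_u)\le 0$, which together give the bound by the minimum. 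For the equality clause, if $\WD([x^\Lambda\partial_k,x^\Theta\partial_u])$ equals the minimum then one of these two differences vanishes, i.e.\ $\WD(x^\Theta\partial_u)=n-1$ or $\WD(x^\Lambda\partial_k)=n-1$, which by the equality case of Lemma~\ref{lem:bounds} forces $x^\Theta\partial_u=\partial_1$ or $x^\Lambda\partial_k=\partial_1$; conversely, if say $x^\Theta\partial_u=\partial_1$ then $\WD(x^\Theta\partial_u)=n-1$ gives $\WD([x^\Lambda\partial_k,x^\Theta\partial_u])=\WD(x^\Lambda\partial_k)$, and since the two elements do not commute we have $x^\Lambda\partial_k\neq\partial_1$, so Lemma~\ref{lem:bounds} gives $\WD(x^\Lambda\partial_k)<n-1=\WD(x^\Theta\partial_u)$ and thus $\WD(x^\Lambda\partial_k)$ is the minimum. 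The only part demanding genuine care — the closest thing to an obstacle in an otherwise mechanical argument — is to use the non-commutativity hypothesis in both of its roles: it ensures $\lambda_u\neq 0$, so that the bracket is a genuine nonzero multiple of a basis element on which $\WD$ and $\lev_i$ are defined, and it rules out the degenerate coincidence $x^\Lambda\partial_k=x^\Theta\partial_u=\partial_1$ in the equality analysis.
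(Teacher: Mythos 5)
Your proof is correct and takes essentially the same route as the paper: a direct computation of $\wt$, $\deg$ on the bracket monomial yields both identities, and the ``Moreover'' clause is read off from Lemma~\ref{lem:bounds}. The only cosmetic differences are that you reduce to $u<k$ (the paper implicitly takes $u>k$, since it writes the bracket as $x^\Gamma\partial_u$), you add an explicit sanity check that $\Lambda+\Theta-e_u\in\Part(k-1)$, and you spell out the equality-case bookkeeping that the paper disposes of as ``a straightforward consequence of Lemma~\ref{lem:bounds}.''
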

\begin{proof}
  Let 
  \(x^\Gamma\partial_u=[x^\Lambda\partial_{k}, x^\Theta\partial_{u}]\). We have 
  \begin{align*}
  	\WD(x^\Gamma\partial_u)&=\wt(\Lambda)+\wt(\Theta)-k -\deg(x^\Lambda)- \deg(x^\Theta) + 1 +n-u\\
  	&=\WD(x^\Lambda\partial_k)+\WD(x^\Theta\partial_u) -(n -1).
  \end{align*} 
	As a consequence
  \begin{align*}
  	\lev_i(x^\Gamma\partial_u)=& h_i\WD(x^\Gamma\partial_u) + \deg (x^\Gamma)-1 \\
  	=& h_i\bigl(\WD(x^\Lambda\partial_k)+\WD(x^\Theta\partial_u) -(n -1)\bigr)  
  	+ \deg (x^\Lambda) +\deg (x^\Theta)-2 \\
  	=&           \lev_i(x^\Lambda\partial_u)+\lev_i(x^\Theta\partial_k)
  	-(n-1)h_i.
  \end{align*}
  The last part  of the claim is a straightforward  consequence of Lemma~\ref{lem:bounds}.
\end{proof}

\begin{remark}\label{rem:increase}
  A direct check shows that
  \begin{equation*}
    \lev_j(x^\Lambda\partial_{k}) = 
    \lev_{i}(x^\Lambda\partial_{k}) +(h_j-h_i)\WD(x^\Lambda\partial_k).
  \end{equation*}
\end{remark}

\begin{lemma}\label{lem:lev_reduction}
  If  \(\lev_i(x^\Lambda\partial_k)   \le  i  \),  then   there  exists
  \(j\le i\) such that \(\lev_j(x^\Lambda\partial_k)=j\).
\end{lemma}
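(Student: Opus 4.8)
The statement asks: if $\lev_i(x^\Lambda\partial_k)\le i$ for some $i\ge -1$, then there is a $j\le i$ with $\lev_j(x^\Lambda\partial_k)=j$. The plan is to view $j\mapsto \lev_j(x^\Lambda\partial_k)$ as an explicit affine function of $j$ (through the dependence of $h_j$ on $j$) and to compare it with the identity function $j\mapsto j$, then locate an integer crossing point. Write $w:=\WD(x^\Lambda\partial_k)$ and $d:=\deg(x^\Lambda)$, so that by definition $\lev_j(x^\Lambda\partial_k)=h_j w + d - 1$. Using Remark~\ref{rem:increase}, for any two indices $j,i$ we have $\lev_j(x^\Lambda\partial_k)-\lev_i(x^\Lambda\partial_k)=(h_j-h_i)w$, so as $j$ decreases by multiples of $n-1$ (which decreases $h_j$ by $1$ at each step), the level decreases by $w$ each time, while the identity function decreases by $n-1$ each time. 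Since Lemma~\ref{lem:bounds} gives $1\le w\le n-1$ under the hypothesis $\lev_i(x^\Lambda\partial_k)\le i$, the level drops no faster than the identity, so once the level is at or below the diagonal it cannot ``escape'' upward as we decrease $j$ — this is the geometric content.

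First I would reduce to indices congruent to $i$ modulo $n-1$: set $i_t:=i-t(n-1)$ for $t\ge 0$, so that $r_{i_t}=r_i$ and $h_{i_t}=h_i-t$. Then define $f(t):=\lev_{i_t}(x^\Lambda\partial_k)-i_t=(h_i-t)w+d-1-\bigl(i-t(n-1)\bigr)=f(0)+t(n-1-w)$. By hypothesis $f(0)\le 0$, and since $w\le n-1$ we have $n-1-w\ge 0$, hence $f$ is nondecreasing in $t$ but $f(0)\le0$ — that gives $f(t)\le 0$ only for $t=0$ in the worst case, which is the wrong direction. So instead I would let $t$ run in the negative direction, i.e. toward larger indices is not allowed since we need $j\le i$; the correct move is: if $f(0)=0$ we are done with $j=i$; if $f(0)<0$, consider decreasing $j$ by $1$ at a time (not by $n-1$). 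Between consecutive integers $j$ and $j+1$, the value $h_j$ either stays the same or drops by one, so $\lev_j-\lev_{j+1}\in\{0,-w\}$, while $j-(j+1)=-1$; thus $\bigl(\lev_j-j\bigr)-\bigl(\lev_{j+1}-(j+1)\bigr)\in\{1,\,1-w\}$. Since $w\ge 1$, this difference is $\le 1$ in absolute value when $w\le 2$, but for larger $w$ it can be as negative as $1-w$. Hence as $j$ decreases the quantity $g(j):=\lev_j(x^\Lambda\partial_k)-j$ changes by steps in $\{1,1-w\}$; it starts at $g(i)=f(0)\le 0$.

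The key step is therefore to show $g(j)$ attains the value $0$ for some $j\le i$. I would argue that $g$ is eventually positive as $j\to-\infty$: indeed along $j=i_t$ we computed $g(i_t)=f(0)+t(n-1-w)$, and if $w<n-1$ this tends to $+\infty$, so by the discreteness of the step set — every downward step is either $+1$ or $1-w$, and crucially an increase of exactly $+1$ occurs at every $j$ with $h_j=h_{j-1}$ — one shows $g$ cannot jump over $0$ without hitting it: whenever $g(j+1)<0$ and $g(j)>0$, the step was $+1$ at a value-preserving transition, forcing $g(j+1)=-1,g(j)=0$ wait — more carefully, one tracks $g$ on the arithmetic progression $j\equiv r_i$ and between such points $g$ only increases (the $n-2$ intermediate unit-steps are all $+1$), so $g$ is monotone nondecreasing along a coset-path; combined with $g(i)\le 0$ and $g\to+\infty$, it must equal $0$ somewhere. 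The remaining edge case $w=n-1$ is exactly the case $x^\Lambda\partial_k=\partial_1$ by Lemma~\ref{lem:bounds}, where $\lev_j(\partial_1)=h_j(n-1)-1$ and a direct computation exhibits the desired $j$. I expect the main obstacle to be the bookkeeping of how $h_j$ behaves across the boundary $j=1$ (where the floor function in the definition of $h_j$ has its quirk) and handling small/negative values of $i$ near the base of the chain; the inequality $1\le w\le n-1$ from Lemma~\ref{lem:bounds} is the decisive input that makes the crossing argument work.
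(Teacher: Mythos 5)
Your approach is genuinely different from the paper's. The paper argues by induction on $i$: setting $\ell := \lev_i(x^\Lambda\partial_k) < i$, it uses Remark~\ref{rem:increase} to get $\lev_\ell(x^\Lambda\partial_k) = \lev_i(x^\Lambda\partial_k) + (h_\ell - h_i)\WD(x^\Lambda\partial_k) \le \ell$ (only $\WD \ge 0$ is needed here, not the upper bound from Lemma~\ref{lem:bounds}), and then either equality holds or the induction hypothesis applies at the strictly smaller index $\ell$, with base case $i=-1$ handled directly. You instead try a crossing argument on $g(j) := \lev_j(x^\Lambda\partial_k) - j$. The intuition is sound — as $j$ decreases by one, $g$ changes by $+1$ or $1-w$, hence increases by at most $1$ per step, so it cannot jump over $0$ from below — but your execution has a genuine gap.

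The gap is in establishing that $g$ actually becomes nonnegative for some admissible $j\le i$. You argue ``$g\to+\infty$ as $j\to-\infty$'', but $j$ is bounded below by $-1$, so this limit is not available, and your fallback (monotonicity of $g$ along the coset $j\equiv r_i \pmod{n-1}$) does not by itself ensure the coset reaches a nonnegative value before exiting the admissible range $j\ge -1$. What you are missing is the one-line observation that $g(-1) = \lev_{-1}(x^\Lambda\partial_k)+1 = \deg(x^\Lambda) \ge 0$: together with $g(i)\le 0$ and the step bound $g(j)-g(j+1)\le 1$, this immediately forces $g(j)=0$ for some $j\in\{-1,\dots,i\}$, and it also makes the separate $w=n-1$ case unnecessary. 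Two smaller points: your claimed lower bound $w\ge 1$ is false (e.g.\ $\WD(x_1^{\lambda_1}\partial_n)=0$), though it is not load-bearing; and the paper's inductive proof is cleaner precisely because it bypasses the $j=-1$ boundary bookkeeping that you flag as a likely obstacle — the base case $i=-1$ absorbs it.
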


\begin{proof}
  We argue by induction on \(i\). If \(i=-1\), then \(h_{-1}=0\) and hence
  \[\lev_{-1}(x^\Lambda\partial_{k}) =  \deg(x^\Lambda)-1 \le  -1\] so
  \(\deg(x^\Lambda)=0\) and \(\lev_{-1}(x^\Lambda\partial_{k}) = -1\).

  Let now      \(i\ge      0\).        We      can       assume      that
  \(\ell:=\lev_i(x^\Lambda\partial_k)     <     i\).     Note     that
  \[\lev_{\ell}(x^\Lambda\partial_{k})=
  \lev_{i}(x^\Lambda\partial_{k})+
  (h_\ell-h_i)\WD(x^\Lambda\partial_{k})                           \le
  \lev_{i}(x^\Lambda\partial_{k})=\ell.\]                               If
  \(\lev_{\ell}(x^\Lambda\partial_{k})=\ell\), then we  have proved the claim,
  otherwise
  \(\lev_{\ell}(x^\Lambda\partial_{k})                               <
  \lev_{i}(x^\Lambda\partial_{k})=\ell   <  i\)   and  the   induction
  hypothesis          is           satisfied. Therefore
  \(\lev_{j}(x^\Lambda\partial_{k}) = j\) for some \(j\le \ell <i\).
\end{proof}

\begin{lemma}\label{lem:lev_plus_n-1}
  If     \(\lev_i(x^\Lambda\partial_k)      \le     i      \),     then
  \(\lev_{i+(n-1)}(x^\Lambda\partial_k) \le i+(n-1)  \). In particular,
  \(\lev_i(x^\Lambda\partial_k)  >   i\)  for  all  
  \(i\le h(n-1)\)  if and only if  \(\lev_i(x^\Lambda\partial_k) > i\)
  for all \(i\) such that \((h-1)(n-1)+1 \le i\le h(n-1)\).
\end{lemma}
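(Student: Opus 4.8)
The plan is to derive the first assertion directly from the identity in Remark~\ref{rem:increase} together with the weight-degree bound of Lemma~\ref{lem:bounds}, and then to obtain the ``in particular'' clause by a short residue-class argument.

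First I would record the elementary fact that $h_{i+(n-1)}=h_i+1$, which follows at once from the definition $h_i=\lfloor (i-1)/(n-1)\rfloor+1$. Applying Remark~\ref{rem:increase} with $j=i+(n-1)$ then gives
\[
 \lev_{i+(n-1)}(x^\Lambda\partial_k)=\lev_i(x^\Lambda\partial_k)+\bigl(h_{i+(n-1)}-h_i\bigr)\WD(x^\Lambda\partial_k)=\lev_i(x^\Lambda\partial_k)+\WD(x^\Lambda\partial_k).
\]
Assuming $\lev_i(x^\Lambda\partial_k)\le i$, Lemma~\ref{lem:bounds} yields $\WD(x^\Lambda\partial_k)\le n-1$, whence $\lev_{i+(n-1)}(x^\Lambda\partial_k)\le i+(n-1)$. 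This is the first assertion, and iterating it shows that the set of indices $i$ for which $\lev_i(x^\Lambda\partial_k)\le i$ is stable under the translation $i\mapsto i+(n-1)$.

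For the equivalence, the implication from ``all $i\le h(n-1)$'' to ``all $i$ in the last block'' is immediate, since $\{i : (h-1)(n-1)+1\le i\le h(n-1)\}\subseteq\{i : i\le h(n-1)\}$. I would prove the converse contrapositively: suppose $\lev_{i_0}(x^\Lambda\partial_k)\le i_0$ for some $i_0\le h(n-1)$. The block above consists of $n-1$ consecutive integers, hence is a complete residue system modulo $n-1$, so it contains a unique index $i^\ast$ with $i^\ast\equiv i_0\pmod{n-1}$, say $i^\ast=i_0+t(n-1)$ with $t\in\Z$. The key point is that $t\ge 0$: otherwise $i_0\ge i^\ast+(n-1)\ge (h-1)(n-1)+1+(n-1)=h(n-1)+1$, contradicting $i_0\le h(n-1)$. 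Applying the first assertion $t$ times starting from $i_0$ then gives $\lev_{i^\ast}(x^\Lambda\partial_k)\le i^\ast$ with $i^\ast$ in the last block, as required.

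The argument is almost entirely formal; the only step requiring any care is the sign check $t\ge 0$ in the last paragraph, which I do not expect to be a genuine obstacle.
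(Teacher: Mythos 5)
Your proof of the first assertion is exactly the paper's argument: apply Lemma~\ref{lem:bounds} to get $\WD(x^\Lambda\partial_k)\le n-1$, note $h_{i+(n-1)}=h_i+1$, and invoke the identity of Remark~\ref{rem:increase}. The paper treats the ``in particular'' clause as an immediate consequence and gives no further argument; your residue-class argument (including the sign check $t\ge 0$) is a correct and careful way to fill in that step.
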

\begin{proof}
  By    Lemma
  \ref{lem:bounds}   we have  that    \(\WD(x^\Lambda\partial_k)\le   n-1\) .        Thus      \(h_{i+(n-1)}=h_{i}+1\)       and
  \[\lev_{i+(n-1)}(x^\Lambda\partial_k)  = \WD(x^\Lambda\partial_k)  +
  \lev_i(x^\Lambda\partial_k) \le (n-1)+i.\]
\end{proof}
\subsection{Introducing the chain}
We are now ready to introduce a chain of subsets of \(\mathcal{B}\). We will prove later that they span as free modules the idealizers of Eq~\eqref{def_chain}.
\begin{definition}\label{def:setchain}
  For \(i \ge -1 \)
  let 
  \begin{equation*}
    \mathcal{N}_i:=\Set{ x^\Lambda\partial_{k} \mid
      \lev_j(x^\Lambda\partial_k) \le  j \text{ for some } j\le i }.
  \end{equation*}
  Moreover, for $i \ge 0$ let 
 \[ \mathcal{L}_i:=\mathcal{N}_i\setminus\mathcal{N}_{i-1}.\]
\end{definition}

\begin{remark}\label{rem:lev_equlity}
 Note            that     $\mathcal{N}_{-1}=\Set{\partial_1,\dots,\partial_n}$, that $\mathfrak{T}$ is the free $\Z$-module spanned by   $\mathcal{N}_{-1}$, 
 and that the subsets  \(\Set{\mathcal{N}_i}_{i\ge -1}\) constitute  an ascending chain of \(\mathcal{B}\).
  Note also that an easy application of Lemma~\ref{lem:lev_reduction} gives
  \begin{equation*}
    \mathcal{N}_i=\Set{ x^\Lambda\partial_{k} \mid
      \lev_j(x^\Lambda\partial_k) =  j \text{ for some } j\le i }.
  \end{equation*}
Furthermore, \(x^{\Lambda}\partial_k\in \mathcal{L}_i\) if and only if \(i\) is minimum such that \(\lev_i(x^\Lambda\partial_k) =  i \), i.e.\
  \[
    \mathcal{L}_i  =   \Set{
      \vphantom{\Big\{}  x^{\Lambda}\partial_{k} \in  \mathcal{B} \: \middle\vert\:
      i= \min_j \Set{j=\lev_j(x^{\Lambda}\partial_{k})} }.
  \]
\end{remark}

\begin{proposition}\label{prop:rem_bound}
  Let \(i\ge 0\).   If \(x^\Lambda\partial_{k}\in \mathcal{L}_i\), then
  \(r_i   >   \WD(x^\Lambda\partial_{k})\).    In   particular   if
  \(             k             <            n-r_i+1\),             then
  \(\mathcal{L}_i\cap \mathcal{B}_k=\emptyset \).
\end{proposition}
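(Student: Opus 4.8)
The plan is to translate ``$x^\Lambda\partial_k\in\mathcal L_i$'' into two linear identities among $h_i$, $r_i$, $w:=\WD(x^\Lambda\partial_k)$ and $d:=\deg(x^\Lambda)$, and then to subtract them. Recall from Definition~\ref{def:indices} that $\lev_j(x^\Lambda\partial_k)=h_j\,w+d-1$ for every integer $j$. By Remark~\ref{rem:lev_equlity} membership in $\mathcal L_i$ means that $i$ is minimal with $\lev_i(x^\Lambda\partial_k)=i$; together with Definition~\ref{def:setchain} and Lemma~\ref{lem:lev_reduction} this is the same as the conjunction of $\lev_i(x^\Lambda\partial_k)=i$ and $\lev_j(x^\Lambda\partial_k)>j$ for all $-1\le j\le i-1$ (the latter being precisely $x^\Lambda\partial_k\notin\mathcal N_{i-1}$).

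First I would dispatch the case $i=0$ separately, since there $h_0=0$ and the auxiliary index introduced below falls out of range. Here $r_0=n-1$; as $\lev_0(x^\Lambda\partial_k)=0$, Lemma~\ref{lem:bounds} gives $w\le n-1$ with $w=n-1$ only for $\partial_1$, and $\partial_1\in\mathcal N_{-1}$ is excluded from $\mathcal L_0$, so $w\le n-2<n-1=r_0$.

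For $i\ge 1$, put $h:=h_i\ge 1$ and consider $j_0:=(h-1)(n-1)=i-r_i$. From $1\le r_i\le n-1$ one gets $-1\le 0\le j_0\le i-1$, so the non-membership condition yields $\lev_{j_0}(x^\Lambda\partial_k)>j_0$; and a one-line computation with the floor in the definition of $h_j$ gives $h_{j_0}=h-1$, hence $\lev_{j_0}(x^\Lambda\partial_k)=(h-1)w+d-1$. Writing out
\begin{align*}
\lev_i(x^\Lambda\partial_k)&=hw+d-1=i=(h-1)(n-1)+r_i,\\
\lev_{j_0}(x^\Lambda\partial_k)&=(h-1)w+d-1>(h-1)(n-1),
\end{align*}
and subtracting, we obtain $r_i-w=\lev_{j_0}(x^\Lambda\partial_k)-j_0>0$, that is $r_i>\WD(x^\Lambda\partial_k)$. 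The ``in particular'' statement is then immediate: for $x^\Lambda\partial_k\in\mathcal L_i\cap\mathcal B_k$ we would have, using the bound just proved together with $\WD(x^\Lambda\partial_k)\ge n-k$ from Lemma~\ref{lem:bounds}, that $r_i>n-k$, i.e.\ $k\ge n-r_i+1$, contradicting $k<n-r_i+1$.

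The whole argument is routine linear bookkeeping once the right auxiliary index is chosen; the only point needing attention — and the reason for handling $i=0$ apart — is to confirm that $j_0$ is an admissible index ($-1\le j_0\le i-1$) lying in the block just below that of $i$, so that $h_{j_0}=h_i-1$ and the second identity above holds.
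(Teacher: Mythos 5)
Your proof is correct and lands on the same structural idea as the paper's --- comparing the level function at $i$ against its value in the block just below --- but you organize it as a direct argument rather than by contradiction. The paper assumes $\WD(x^\Lambda\partial_k)\ge r_i$, sets $j:=i-\WD(x^\Lambda\partial_k)$, checks that $j$ falls in the range where $h_j=h_i-1$, and then deduces via Remark~\ref{rem:increase} that $\lev_j(x^\Lambda\partial_k)=j<i$, contradicting minimality. You instead fix the canonical index $j_0:=(h_i-1)(n-1)=i-r_i$ at the top of the previous block, read $\lev_{j_0}(x^\Lambda\partial_k)>j_0$ directly from $x^\Lambda\partial_k\notin\mathcal{N}_{i-1}$, and subtract this from $\lev_i(x^\Lambda\partial_k)=i$ to get $\WD(x^\Lambda\partial_k)=r_i-\bigl(\lev_{j_0}(x^\Lambda\partial_k)-j_0\bigr)<r_i$ in one line. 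Two small points in your version's favour: it avoids the contradiction hypothesis entirely, and it treats $i=0$ separately and explicitly (the paper's inequality $j>i-(n-1)$ tacitly requires $\WD(x^\Lambda\partial_k)<n-1$, which does hold because $\partial_1\in\mathcal{N}_{-1}$ and hence lies outside every $\mathcal{L}_i$, but that is left unstated). Beyond these cosmetic differences the two proofs are the same ``one block back'' computation, so yours is a correct, slightly tidier rendering of the paper's argument.
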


\begin{proof}
  Assume         by  contradiction         that
  \(x^\Lambda\partial_{k}\in            \mathcal{L}_i\)            and
  \(r_i \le \WD(x^\Lambda\partial_{k})\).  We
  have    that     \(i\)    is    minimum        such    that
  \(i=\lev_i(x^\Lambda               \partial_k)\).               Thus
  \(j  :=i-\WD(x^\Lambda\partial_k)  >  i   -(n-1)\)  is  such  that
  \[(h_i-2)(n-1) <  j=(h_i-1)(n-1)+r_i -\WD(x^\Lambda\partial_k) \le
  (h_i-1)(n-1).\]       Hence      \(h_j=(h_i-1)\)       and,      by
  Remark~\ref{rem:increase},
  \[j=i-\WD(x^\Lambda\partial_k)=\lev_i(x^\Lambda\partial_k)         -
  \WD(x^\Lambda\partial_k)=   \lev_j(x^\Lambda\partial_k),\]   where
  \(j<i\), a contradiction.
	
  If             \(       k      <       n-r_i+1\),       then
  \(r_i\le n-k  \le \WD(x^\Lambda\partial_k)\). From above, we have
  \(\mathcal{L}_i\cap \mathcal{B}_k=\emptyset \).
\end{proof}

The following corollary is straightforward.

\begin{corollary}\label{cor:deg_equiv_1_mod_n-1}
  If                                                    \(r_t=1\)~then
  \(\mathcal{L}_t             =
  \Set{x_1^{t+1}\partial_{n}}\).
\end{corollary}
\subsection{Periodicity}
We can now define a concept of \emph{periodicity} for the sequence \(\Set{\mathcal{L}_i}_{i\ge 0}\).
The following definition is crucial.

\begin{definition}\label{def:period}
  The                      \emph{period                      function}
  \(\per\colon \mathcal{B} \to \mathcal{B}\) is defined by
  \begin{equation*}   
  \per(x^\Lambda\partial_k) :=
    x_1^{n-1-\WD(x^\Lambda\partial_{k})}x^\Lambda\partial_k.
  \end{equation*} 
\end{definition}

\begin{remark}
  Note that if \(x^\Lambda\partial_k\in \mathcal{L}_i\), then
  \begin{align*}
    \lev_{i+n-1}(\per(x^\Lambda\partial_k))
    &=\lev_{i}(\per(x^\Lambda\partial_k))+ \WD(\per(x^\Lambda\partial_k)) \\
    &=\lev_{i}(\per(x^\Lambda\partial_k))+ \WD(x^\Lambda\partial_k) \\
    &=\lev_{i}(x^\Lambda\partial_k)+ \bigl(n-1 -\WD(x^\Lambda\partial_k)\bigr)+ \WD(x^\Lambda\partial_k) \\
    &=\lev_i(x^\Lambda\partial_{k}) +n-1=i+n-1,
  \end{align*}
  so   \(\per(x^\Lambda\partial_k)  \in \mathcal{L}_{i+n-1}\),  as
  \(t=i+n-1\) is  trivially  minimum   such that \(\lev_t(\per(x^\Lambda\partial_k))=t\). Moreover, since  the period map is  injective,
  we                             have                             that
  \(\Size{\mathcal{L}_{i}} \le \Size{\mathcal{L}_{i+n-1}} \).
\end{remark}
	
\begin{remark}
  Let  \(i\ge   1\) and   \(k>   u\ge  1\). Suppose   that
  \(x^\Lambda\partial_{k}\in       \mathcal{L}_i\)      and       
  \(x^\Theta\partial_{u}\in                  \mathcal{L}_{h_i(n-1)}\).
  Eq.~\eqref{eq:horizontal_operation}          shows         that
  \(\lambda_u               x^\Gamma\partial_k :=[x^\Lambda\partial_{k},
  x^\Theta\partial_{u}]\in\Z\mathcal{L}_i\),                      as
  \(\lev_i(x^\Gamma\partial_k)=\lev_i(x^\Lambda\partial_{k})\),
  whereas      Eq.~\eqref{eq:wd_decreasing}       shows      that
  \(\WD(x^\Gamma\partial_k) < \WD(x^\Lambda\partial_{k})\).  Note also
  that                           the                           element
  \(b_u=x_1^{h_i}x_{u-1}\partial_{u}  \in\mathcal{L}_{h_i(n-1)}\). Hence, given \(x^\Lambda\partial_{k}\in \mathcal{L}_i\), there exists a
  sequence \(u_1,u_2,\dots,u_s\) such that
  \[[x^\Lambda\partial_{k},  b_{u_1},b_{u_2},\dots,b_{u_s}] =  \lambda
    x_1^{i-h_i(n-k)+1}\partial_k\in \Z\mathcal{L}_i\] for a suitable
  non-zero   \(\lambda\in\Z\).   Eq.~\eqref{eq:wd_decreasing}
  shows                                                           that
  \(\WD([x^\Lambda\partial_{k},
  b_{u}])=\WD(x^\Lambda\partial_{k})-1\).                        Hence, from 
  Lemma~\ref{lem:bounds}, we have     \(s\le   k-2\).    Moreover,   if
  \(\lambda_{u} \ne 0\), then
  \begin{equation*}
    [x^\Lambda\partial_{k}, \underbrace{b_{u},\dots ,b_{u}}_{\text{\(\lambda_{u}\) times}},\underbrace{b_{u-1}, \dots,b_{u-1}}_{\text{\(\lambda_{u}\) times}},\dots,\underbrace{b_{2},\dots ,b_{2}}_{\text{\(\lambda_{u}\) times}}] 
    =                                                          \lambda
    x_u^{-\lambda_{u}}x_1^{\lambda_{u}(1+(h_i-1)(u-1))}x^\Lambda
    \partial_k
  \end{equation*}
  for     some     non-zero     \(\lambda\in     \Z\),     so     that
  \(\sum \lambda_{u}(u-1)  =s \le  k-2\), a condition which  is trivially
  equivalent  to \(\WD(x^\Lambda\partial_{k})  \le  n-2\), as  already
  seen in Lemma~\ref{lem:bounds}.
\end{remark}
The  previous remark  together with  Eq.~\eqref{eq:wd_decreasing}
and Lemma~\ref{lem:bounds} yield the following result.
\begin{proposition}
  If  \(\mathcal{L}_i  \cap  \mathcal{B}_k\ne \emptyset\),  then  there
  exists at  most one element in  \(\mathcal{L}_i \cap \mathcal{B}_k\)
  of  the form  \(x_1^t\partial_k\), in  which case  it is  the unique
  element        having         minimum        weight-degree        in
  \(\mathcal{L}_i     \cap      \mathcal{B}_k\).      The     exponent
  \(t=i-h_i(n-k)+1\)     is     determined    by     the     condition
  \(x_1^t\partial_k\in \mathcal{L}_{i}\).
\end{proposition}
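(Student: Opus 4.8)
The plan is to analyze the structure of $\mathcal{L}_i \cap \mathcal{B}_k$ using the weight-degree as the principal invariant, exploiting the ``descent'' machinery that the preceding remark makes available. First I would establish the explicit exponent: if $x_1^t\partial_k \in \mathcal{L}_i$, then $i$ is the minimum index with $\lev_i(x_1^t\partial_k) = i$, and since $\WD(x_1^t\partial_k) = n-k$ (by Lemma~\ref{lem:bounds}, as $x_1^t\partial_k$ has $\wt = \deg$) while $\deg(x_1^t\partial_k) = t$, the defining equation $h_i\WD + \deg - 1 = i$ becomes $h_i(n-k) + t - 1 = i$, giving $t = i - h_i(n-k) + 1$. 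Thus there is \emph{at most one} monomial of the form $x_1^t\partial_k$ that can lie in $\mathcal{L}_i \cap \mathcal{B}_k$, and its exponent is forced; it remains to see that any such monomial, when it occurs, has strictly smaller weight-degree than every other element of $\mathcal{L}_i \cap \mathcal{B}_k$.

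For the minimality of the weight-degree, I would argue as follows. Suppose $x^\Lambda\partial_k \in \mathcal{L}_i \cap \mathcal{B}_k$ is arbitrary; I want $\WD(x^\Lambda\partial_k) \ge n-k$, with equality forcing $x^\Lambda\partial_k = x_1^{\lambda_1}\partial_k$. The inequality $\WD(x^\Lambda\partial_k) \ge n-k$ is immediate from the first part of Lemma~\ref{lem:bounds}, and equality holds there precisely when $x^\Lambda\partial_k = x_1^{\lambda_1}\partial_k$ --- so in fact the content of the proposition is already largely contained in that lemma. The point of invoking the previous remark together with Eq.~\eqref{eq:wd_decreasing} is to guarantee that whenever $\mathcal{L}_i \cap \mathcal{B}_k \ne \emptyset$ and contains some element $x^\Lambda\partial_k$, one can repeatedly bracket with the auxiliary elements $b_u = x_1^{h_i}x_{u-1}\partial_u \in \mathcal{L}_{h_i(n-1)}$ to stay inside $\Z\mathcal{L}_i$ (by Eq.~\eqref{eq:horizontal_operation}, which keeps $\lev_i$ fixed) while strictly decreasing the weight-degree by $1$ at each step (by Eq.~\eqref{eq:wd_decreasing}), terminating exactly at a monomial of the form $x_1^t\partial_k$ with $\WD = n-k$. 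Hence $\mathcal{L}_i \cap \mathcal{B}_k$, when nonempty, necessarily contains the monomial $x_1^t\partial_k$, and by the first paragraph $t = i - h_i(n-k) + 1$; since every other element has strictly larger weight-degree, $x_1^t\partial_k$ is the unique element of minimum weight-degree.

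The main obstacle --- which is really a bookkeeping obstacle rather than a conceptual one --- is verifying that the bracketing process described in the remark genuinely lands one in $\Z\mathcal{L}_i$ at \emph{every} intermediate step, not just at the final monomial: one must check that each partial bracket $[x^\Lambda\partial_k, b_{u_1}, \dots, b_{u_j}]$ is again homogeneous with $\lev_i$ equal to $\lev_i(x^\Lambda\partial_k) = i$, so that it lies in $\mathcal{N}_i$, and that it does \emph{not} already lie in $\mathcal{N}_{i-1}$ --- but the latter follows because its weight-degree is at most that of $x^\Lambda\partial_k$, and by Proposition~\ref{prop:rem_bound} any element of $\mathcal{L}_j$ with $j < i$ has weight-degree bounded by $r_j$, which (combined with Remark~\ref{rem:increase}) is incompatible with having $\lev_i = i$ unless the element is already in $\mathcal{L}_i$. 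I would then assemble these observations: the weight-degree strictly decreases along the descent, it is bounded below by $n-k$ with equality only at $x_1^{\lambda_1}\partial_k$, the descent terminates, hence the terminal monomial $x_1^t\partial_k$ belongs to $\mathcal{L}_i$, its exponent is pinned down by $\lev_i(x_1^t\partial_k) = i$ to be $t = i - h_i(n-k)+1$, and uniqueness follows since no two distinct monomials of that shape in $\mathcal{B}_k$ can share the same level index.
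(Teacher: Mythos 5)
Your proposal follows essentially the same route the paper intends; the paper's own ``proof'' is a one-sentence pointer to the preceding remark, Eq.~\eqref{eq:wd_decreasing}, and Lemma~\ref{lem:bounds}, and your three-step argument unpacks exactly those ingredients: the forced exponent from the level equation $h_i\WD + \deg - 1 = i$, the minimality of $\WD$ from the first part of Lemma~\ref{lem:bounds}, and existence via the remark's iterated bracketing with the $b_u$. Your first two paragraphs are in fact a complete proof of the ``at most one'' and ``unique minimal weight-degree'' clauses, since those are direct consequences of $\WD(x_1^t\partial_k) = n-k$ together with Lemma~\ref{lem:bounds}, and do not require the bracketing at all; the bracketing is only needed if one wants to upgrade ``at most one'' to ``exactly one,'' which the paper delegates to the remark.

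One point worth tightening: in your third paragraph you assert that each partial bracket $[x^\Lambda\partial_k, b_{u_1}, \dots, b_{u_j}]$ lies in $\mathcal{L}_i$ rather than some $\mathcal{L}_{j'}$ with $j' < i$, and you wave at Proposition~\ref{prop:rem_bound} and Remark~\ref{rem:increase} as being ``incompatible with having $\lev_i = i$ unless the element is already in $\mathcal{L}_i$.'' That is true but deserves a sentence of verification: since each intermediate $x^\Gamma\partial_k$ satisfies both $\lev_i(x^\Gamma\partial_k) = i$ (by Eq.~\eqref{eq:horizontal_operation} and the fact that $\lev_i(b_u) = h_i(n-1)$) and $\WD(x^\Gamma\partial_k) < r_i$ (because it is strictly below the weight-degree of the starting element, which is $< r_i$ by Proposition~\ref{prop:rem_bound}), if it were in $\mathcal{L}_{j'}$ with $j' < i$ then Remark~\ref{rem:increase} forces $(h_i - h_{j'})(n-1-\WD) = r_{j'} - r_i$; but $h_i - h_{j'} \ge 1$ and $n - 1 - \WD > n-1-r_i \ge 0$ give $r_{j'} > n-1$, a contradiction. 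With that gap explicitly closed your argument is correct and matches the paper's approach.
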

	

Now we can prove one of the main contributions of this work where we give a precise characterization of the elements of \(\mathcal{L}_i\).
\begin{theorem}\label{thm:elements_of_Li_cap_Bk}
  Let \(i\ge 0\). A    basis    element     \(x^\Lambda\partial_{k}\)    belongs    to
  \(\mathcal{L}_{i}\cap             \mathcal{B}_k\)              if and only  if the
  following conditions are satisfied:
  \begin{enumerate}[(a)]
  \item \(n-k \le \WD(x^\Lambda\partial_{k}) < r_i\),
  \item \(i=\lev_i(x^\Lambda\partial_k) \).
  \end{enumerate}
\end{theorem}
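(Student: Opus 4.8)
The plan is to prove the two implications of the biconditional separately. The forward direction will be an immediate assembly of results already in hand, whereas the converse carries the content.

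For the forward direction, assume $x^\Lambda\partial_k\in\mathcal{L}_i\cap\mathcal{B}_k$. Since $x^\Lambda\partial_k$ is a basis element, Lemma~\ref{lem:bounds} gives the left-hand inequality $\WD(x^\Lambda\partial_k)\ge n-k$ of (a); the reformulation of $\mathcal{L}_i$ in Remark~\ref{rem:lev_equlity} says that $i$ is the least index with $\lev_i(x^\Lambda\partial_k)=i$, so in particular (b) holds; and Proposition~\ref{prop:rem_bound}, applied to $x^\Lambda\partial_k\in\mathcal{L}_i$, gives $\WD(x^\Lambda\partial_k)<r_i$, which is the right-hand inequality of (a). Thus this direction is nothing more than the conjunction of these three facts.

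For the converse, assume (a) and (b) and put $w:=\WD(x^\Lambda\partial_k)$. Condition (b) gives $\lev_i(x^\Lambda\partial_k)=i$, so $x^\Lambda\partial_k\in\mathcal{N}_i$, and it lies in $\mathcal{B}_k$ by hypothesis; it remains to show $x^\Lambda\partial_k\notin\mathcal{N}_{i-1}$, which together with $x^\Lambda\partial_k\in\mathcal N_i$ gives $x^\Lambda\partial_k\in\mathcal{L}_i$. Suppose not; then by Definition~\ref{def:setchain} and Lemma~\ref{lem:lev_reduction} there is an integer $j<i$ with $\lev_j(x^\Lambda\partial_k)=j$. Remark~\ref{rem:increase} then yields
\[
j=\lev_j(x^\Lambda\partial_k)=\lev_i(x^\Lambda\partial_k)+(h_j-h_i)\,w=i+(h_j-h_i)\,w,
\]
so $i-j=(h_i-h_j)\,w$. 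If $w=0$ this forces $i=j$, contradicting $j<i$; hence $w\ge 1$, and since $i-j>0$ we get $h_i-h_j\ge 1$. Now I would bring in the residue decomposition: from $i=(h_i-1)(n-1)+r_i$ and $j=(h_j-1)(n-1)+r_j$ with $1\le r_i,r_j\le n-1$ we obtain the identity $i-j=(h_i-h_j)(n-1)+(r_i-r_j)$, and comparing it with $i-j=(h_i-h_j)\,w$ gives $(h_i-h_j)(n-1-w)=r_j-r_i$. Since $w<r_i$ by (a), we have $w\le r_i-1$ and hence $n-1-w\ge n-r_i\ge 1$; together with $h_i-h_j\ge 1$ this makes the left-hand side at least $n-r_i$, whereas the right-hand side is at most $(n-1)-r_i<n-r_i$, a contradiction. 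Therefore no such $j$ exists, $x^\Lambda\partial_k\in\mathcal{L}_i$, and the proof is complete.

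The only genuine obstacle is organizational: the \emph{strict} bound $\WD(x^\Lambda\partial_k)<r_i$ from (a) has to be inserted at precisely the step where the two expressions for $i-j$ are equated, and everything else is forced. Two small points need to be handled cleanly: the degenerate case $w=0$ (possible only when $k=n$, in which case (b) forces $x^\Lambda\partial_k=x_1^{i+1}\partial_n$) must be dispatched before one divides by $w$; and the identity $i-j=(h_i-h_j)(n-1)+(r_i-r_j)$, together with $1\le r_i,r_j\le n-1$, is purely definitional and valid for all indices, including the small cases $j\in\{-1,0\}$ where $h_j=0$, so no separate boundary analysis is required.
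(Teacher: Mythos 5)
Your proof is correct and follows essentially the same strategy as the paper: the easy direction is assembled from Lemma~\ref{lem:bounds}, Remark~\ref{rem:lev_equlity}, and Proposition~\ref{prop:rem_bound}, and the substantive direction is a proof by contradiction that no $j<i$ satisfies $\lev_j(x^\Lambda\partial_k)=j$, driven by the identity of Remark~\ref{rem:increase} and the strict bound $\WD<r_i$ from (a). The only difference is cosmetic: the paper expands $r_i$ in terms of $r_j$ and argues by cases on $h_i>h_j$ versus $h_i=h_j$, whereas you equate two expressions for $i-j$ and derive the single inequality $(h_i-h_j)(n-1-\WD)\ge n-r_i>r_j-r_i$, which absorbs both cases into one clean contradiction.
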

	
\begin{proof}
  Suppose  first that (a)  and  (b)  are
  satisfied. We  have
  \(\lev_i(x^\Lambda\partial_{k})       =      i\),       and so
  \(x^\Lambda\partial_{k}\in  \mathcal{L}_j\cap  \mathcal{B}_k\),  for
  some  \(j\le   i\).   By contradiction,   assume  that  
  \(j=\lev_j(x^\Lambda\partial_k)\) and that \(i\ne j\). We have
  \(i>      j=(h_j-1)(n-1)+r_j  = \lev_j(x^\Lambda\partial_k)        =
  h_j\WD(x^\Lambda\partial_{k})  +   \deg(x^\Lambda)-1\).   Note  that at least one of the two conditions 
  \(h_i >h_j\) or \(r_i>r_j\) must be satisfied, so we have
  \begin{align*}
    r_i&= i-(h_i-1)(n-1)\\
       &= \lev_i(x^\Lambda\partial_k)-(h_i-1)(n-1)\\
       &= h_i\WD(x^\Lambda\partial_{k}) + \deg(x^\Lambda)-1 -(h_i-1)(n-1)\\
       &= (h_i-h_j)(\WD(x^\Lambda\partial_{k})-n+1) +\lev_j(x^\Lambda\partial_{k})  - (h_j-1)(n-1)\\
       &= (h_i-h_j)(\WD(x^\Lambda\partial_{k})-n+1) +j  - (h_j-1)(n-1)\\
       &= (h_i-h_j)(\WD(x^\Lambda\partial_{k})-n+1) +r_j\\
       & \le \begin{cases}
         \WD(x^\Lambda\partial_{k}) +(r_j-n+1)\le \WD(x^\Lambda\partial_{k}) < r_i & \text{if \(h_i > h_j\)}\\
         r_j < r_i & \text{if \(h_i = h_j\)},
       \end{cases}
  \end{align*}
  which is, in both the cases, a contradiction. Hence \(j=i \).
	
%
%
	
  Conversely,                       suppose                       that
  \(x^\Lambda\partial_{k}\in  \mathcal{L}_i\cap  \mathcal{B}_k\), so \(i= \lev_i(x^\Lambda\partial_{k})\).
  By Proposition~\ref{prop:rem_bound}  and Lemma~\ref{lem:bounds}
   follows \( n-k \le \WD(x^\Lambda\partial_{k}) < r_i\).
\end{proof}
\subsection{Connection to the sequence of partitions}
Let us give a description of the behavior of the chain \(\Set{\mathcal{N}_i}_{i}\) in terms of integer partitions.
Let  \(\{a_n\}_{n=0}^\infty\) be  the sequence  whose term  \(a_n\) is
equal   to   the   number   of  partitions   of   \(n\).    Let  also
\(b_n=\sum_{i=0}^na_i\) be the partial sum of $\{a_i\}$ and
\(c_n=\sum_{i=0}^nb_n\) be the partial sum of $\{b_i\}$, or the \emph{second partial sum} of $\{a_i\}$. The first values of the sequences are displayed in Table~\ref{tab:zero}, 
which also contains the corresponding OEIS references~\cite{OEIS}.

\begin{table}[hb]
		\centering
	{\renewcommand{\arraystretch}{1.3}
		\begin{tabular}{c||c|c|c|c|c|c|c|c|c|c|c|c|c|c|c||c}
			$i$ &0& 1 & $2$ & $3$ & $4$ & $5$ & $6$ & $7$ & $8$ & $9$ & 10& $11$ &$12$&13&14&OEIS\\
			\hline\hline
			${a_i}$&1& 1& 2& 3& 5& 7& 11& 15& 22& 30& 42& 56& 77& 101& 135&  \href{https://oeis.org/A000041}{A000041}\\ 
			${b_i}$&1&1& 3& 6& 11& 18& 29& 44& 66& 96& 138& 194& 271& 372& 507&  \href{https://oeis.org/A026905}{A026905}\\
			${c_i}$&1&1& 4& 10& 21& 39& 68& 112& 178& 274& 412& 606& 877& 1249& 1756&  \href{https://oeis.org/A085360}{A085360}\\ 
			\hline		
			\end{tabular}
		\bigskip }
	\caption{First values of the sequences $\{a_i\}, \{b_i\}$ and $\{c_i\}$}
	\label{tab:zero}
\end{table}

We are now ready to show that, after a threshold value with depends quadratically on $n$, the sequence $\{ \Size{\mathcal{L}_{i}}\}$ is periodic and how
it depends on $\{c_i\}$. Here the value $r_i$ is as in Definition~\ref{def:indices}.
\begin{corollary}\label{cor:lev_sizes}
  If \(i > (n-4)(n-1)\) and $1 \leq k \leq n$, then
  \begin{align*}
    \Size{\mathcal{L}_{i}\cap \mathcal{B}_k}&=b_{r_i+k-n-1}\\
    \intertext{and}
    \Size{\mathcal{L}_{i}}&= c_{r_i-1}.
  \end{align*}
\end{corollary}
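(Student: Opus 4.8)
The plan is to deduce Corollary~\ref{cor:lev_sizes} from the explicit characterization of $\mathcal{L}_i \cap \mathcal{B}_k$ given in Theorem~\ref{thm:elements_of_Li_cap_Bk}, turning the two conditions (a) and (b) into a combinatorial counting problem over integer partitions. First I would fix $i > (n-4)(n-1)$ and $k$, and observe that $h_i = \lfloor (i-1)/(n-1)\rfloor + 1$ is then large (at least $n-3$ or so); this is precisely the regime in which condition (b), namely $i = \lev_i(x^\Lambda\partial_k) = h_i\WD(x^\Lambda\partial_k) + \deg(x^\Lambda) - 1$, becomes non-restrictive. Indeed, rewriting (b) as $\deg(x^\Lambda) - 1 = i - h_i\WD(x^\Lambda\partial_k) = (h_i-1)(n-1) + r_i - h_i\WD(x^\Lambda\partial_k)$, I would check that once $\WD(x^\Lambda\partial_k)$ lies in the admissible range $[n-k, r_i-1]$ from (a), the right-hand side is automatically a non-negative integer, so (b) just pins down $\deg(x^\Lambda)$ as a function of the other data and imposes no further constraint on which partition $\Lambda$ is used (beyond its maximal part being $\le k-1$, which is the defining condition of $\mathcal{B}_k$). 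The threshold $(n-4)(n-1)$ should be exactly what guarantees $\deg(x^\Lambda) \ge 0$ for all relevant $\WD$ values.

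Next I would translate the count. Writing $w := \WD(x^\Lambda\partial_k) = \wt(\Lambda) - \deg(x^\Lambda) + n - k$, condition (a) says $n-k \le w < r_i$, i.e. $0 \le \wt(\Lambda) - \deg(x^\Lambda) \le r_i + k - n - 1$. For a fixed value $s := \wt(\Lambda) - \deg(x^\Lambda) = \sum_i (i-1)\lambda_i$, the number of partitions $\Lambda$ with parts in $\{1,\dots,k-1\}$ and $\sum_i(i-1)\lambda_i = s$ equals the number of partitions of $s$ into parts from $\{1,\dots,k-2\}$ (substituting $j = i-1$); and once $\Lambda$ is chosen, $\deg(x^\Lambda)$ is determined by (b) — I need to confirm it comes out non-negative, which is where the hypothesis on $i$ enters. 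Summing over $s$ from $0$ to $r_i+k-n-1$ gives $\Size{\mathcal{L}_i \cap \mathcal{B}_k} = \sum_{s=0}^{r_i+k-n-1} p_{\le k-2}(s)$, where $p_{\le m}(s)$ counts partitions of $s$ with parts $\le m$. For the first displayed formula I then want to claim this equals $b_{r_i+k-n-1}$; since $b_t = \sum_{s=0}^t a_s$ with $a_s$ the total number of partitions of $s$, this requires $p_{\le k-2}(s) = a_s$ for all $s$ in range, i.e. the bound $k-2$ on part size is not actually binding. This holds because $s \le r_i + k - n - 1 \le k-2$ (using $r_i \le n-1$), and a partition of $s \le k-2$ automatically has all parts $\le k-2$. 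So the restriction disappears and the first identity follows.

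For the second displayed formula, $\Size{\mathcal{L}_i} = c_{r_i-1}$, I would simply sum the first identity over $k$. Since $\mathcal{B} = \bigsqcup_{k=1}^n \mathcal{B}_k$, we get $\Size{\mathcal{L}_i} = \sum_{k=1}^n b_{r_i+k-n-1} = \sum_{t = r_i - n}^{r_i - 1} b_t$. Here $b_t = 0$ for $t < 0$ (there are no partitions of a negative integer), and since $r_i - n \le -1$, the lower terms vanish and the sum telescopes to $\sum_{t=0}^{r_i-1} b_t = c_{r_i-1}$, by definition of the second partial sum $c$. This matches the claim.

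The main obstacle I anticipate is the bookkeeping around condition (b) in the threshold regime: I must verify carefully that for $i > (n-4)(n-1)$ the quantity $\deg(x^\Lambda) = i - h_i w + 1 = (h_i-1)(n-1) + r_i - h_i w + 1$ is non-negative for every $w$ with $n-k \le w \le r_i - 1$, so that a valid partition $\Lambda$ with the prescribed value of $\wt(\Lambda) - \deg(x^\Lambda)$ actually exists and is uniquely reconstructible (one also needs that $\deg(x^\Lambda) \le \wt(\Lambda)$, which is automatic, and that nothing forces $\lambda_1$ to be negative — but $\lambda_1$ is a free nonnegative parameter absorbing the discrepancy $\deg(x^\Lambda) - s$ via $\lambda_1 = \deg(x^\Lambda) - \sum_{i\ge 2}\lambda_i$, so I must check $\deg(x^\Lambda) \ge \sum_{i \ge 2}\lambda_i$, equivalently the degree budget is large enough). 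The worst case is $w = r_i - 1$ and $r_i$ small together with $h_i$ as forced by $i$; a short estimate using $h_i \ge (i-1)/(n-1)$ and $i > (n-4)(n-1)$ should close this, and I would present that estimate explicitly as the one non-routine computation. Everything else is a clean bijection between basis elements and partitions followed by a telescoping sum.
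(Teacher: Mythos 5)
Your proposal follows essentially the same route as the paper's proof: use Theorem~\ref{thm:elements_of_Li_cap_Bk} to reduce the count to choosing the tail $(\lambda_2,\lambda_3,\dots)$ subject to $0\le\sum_{u\ge 2}(u-1)\lambda_u\le r_i+k-n-1$, observe that the bound on the sum is at most $k-2$ so the restriction on maximal part is vacuous (which gives $b_{r_i+k-n-1}$), let condition~(b) determine $\lambda_1$, check $\lambda_1\ge 0$ via the hypothesis $i>(n-4)(n-1)$ (which forces $h_i\ge n-3$), and then sum over $k$, noting that $b_t=0$ for $t<0$, to obtain $c_{r_i-1}$. The one place you explicitly defer — verifying $\lambda_1=\deg(x^\Lambda)-\sum_{u\ge 2}\lambda_u\ge 0$ in the worst case $w=r_i-1$, $\sum_{u\ge 2}\lambda_u=r_i+k-n-1$ — is exactly the estimate the paper carries out, landing on $\lambda_1\ge h_i(n-r_i)+3-k\ge n-k\ge 0$; your plan for closing it is correct. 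You are also slightly more explicit than the paper about why the bound $s\le k-2$ makes the part-size restriction disappear, which is a small but genuine clarification.
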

\begin{proof}
 Let
  \(x^\Lambda\partial_{k}\in  \mathcal{L}_i\cap   \mathcal{B}_k\).  By
  Theorem~\ref{thm:elements_of_Li_cap_Bk} $x^\Lambda\partial_{k}$ satisfies
    \begin{itemize}
  \item[(a)]\(n-k\le \WD(x^\Lambda\partial_{k}) < r_i\),
  \item[(b)] \(i=\lev_i(x^\Lambda\partial_{k})\).
  \end{itemize}
  Letting \(\theta_u=\lambda_{u+1}\),  the condition (a) can  be rewritten
  as \(0\le \sum_{u\ge 1} u\theta_u < r_i+k-n\), where \(\theta_u\) is
  a  non-negative   integer.   There  are   exactly  \(b_{r_i+k-n-1}\)
  sequences   \(\Theta=\{\theta_u\}_{u=1}^\infty\),    including   the
  trivial one, satisfying this condition and determining the values of
  \(\lambda_u\) for \(u\ge 2\). The value of \(\lambda_1\) is uniquely
  determined by the condition  (b).  Indeed \(\WD(x^\Lambda\partial_{k})\)
  does  not depend  on  \(\lambda_1\)  and by   (b) we  have
  \(i=h_i\WD(x^\Lambda\partial_{k})+\lambda_1+\sum_{u\ge
    2}\lambda_u-1\).  Moreover,  by the hypotheses we  have \(h_i\ge n-3\).
  Thus
  \begin{align*}
    \lambda_1&=i - h_i\WD(x^\Lambda\partial_{k}) - \sum_{u\ge 2}\lambda_u+1 \\
             &=(h_i-1)(n-1)+r_i-h_i\WD(x^\Lambda\partial_{k}) - \sum_{u\ge 2}\lambda_u+1\\
             &=h_i\left(n-1-\WD(x^\Lambda\partial_{k})\right) -n+1 +r_i - \sum_{u\ge 2}\lambda_u+1 \\
             & \ge h_i\left(n-1-r_i+1\right) -n+1 +r_i - r_i-k+n+2\\
             & = h_i\left(n-r_i\right)  +3 -k \ge  h_i  +3 -k \ge n-k \ge 0
  \end{align*}	
  is uniquely determined and non-negative.
  
  Finally the equality  \(\Size{\mathcal{L}_{i}}= c_{r_i-1}\) is obtained
  computing
  \(\sum_{k=2}^{n} \Size{\mathcal{L}_{i}\cap \mathcal{B}_k}\).
\end{proof}
A straightforward consequence is the following result, which proves that the  sequence  \(\{\Size{\mathcal{L}_i}\}_{i\ge 0}\)  is  definitely
  periodic.
\begin{corollary}\label{cor:main}
  If  \(i >  (n-4)(n-1)\), then  the period  function $\per$ is a  bijection
 from  \(\mathcal{L}_i\) to  \(\mathcal{L}_{i+n-1}\).  
\end{corollary}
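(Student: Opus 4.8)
The plan is to observe that almost all of the work is already done, so that only a short counting argument remains. By the remark following Definition~\ref{def:period}, the period map $\per$ sends every element of $\mathcal{L}_i$ into $\mathcal{L}_{i+n-1}$, and $\per$ is injective on the whole of $\mathcal{B}$; hence its restriction is an injection $\mathcal{L}_i \hookrightarrow \mathcal{L}_{i+n-1}$. Since both $\mathcal{L}_i$ and $\mathcal{L}_{i+n-1}$ are finite, it therefore suffices to prove the cardinality equality $\Size{\mathcal{L}_i} = \Size{\mathcal{L}_{i+n-1}}$: an injection between finite sets of equal size is automatically a bijection.

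For the cardinality equality I would simply quote Corollary~\ref{cor:lev_sizes}. The hypothesis $i > (n-4)(n-1)$ together with $n-1 \ge 1$ forces $i + (n-1) > (n-4)(n-1)$ as well, so the corollary applies to both indices and gives $\Size{\mathcal{L}_i} = c_{r_i - 1}$ and $\Size{\mathcal{L}_{i+n-1}} = c_{r_{i+n-1} - 1}$. Finally, since $i + (n-1) \equiv i \pmod{n-1}$ and both $r_i$ and $r_{i+n-1}$ lie in $\{1,\dots,n-1\}$ by Definition~\ref{def:indices}, we conclude $r_{i+n-1} = r_i$, hence $\Size{\mathcal{L}_i} = c_{r_i-1} = \Size{\mathcal{L}_{i+n-1}}$, and the proof is complete.

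I do not anticipate a genuine obstacle here; the whole statement is, as the surrounding text advertises, a straightforward consequence of what precedes it. The substantive inputs are the remark establishing that $\per$ is an injective, level-raising map and, above all, Corollary~\ref{cor:lev_sizes}, which pins down $\Size{\mathcal{L}_i}$ as the closed form $c_{r_i-1}$ once $i$ has passed the quadratic threshold. The only thing left to check is the entirely routine arithmetic that the threshold hypothesis is inherited by $i + n - 1$ and that $r_i$ depends on $i$ only through its residue modulo $n-1$, so that the stabilized value $c_{r_i - 1}$ is invariant under the shift $i \mapsto i + n - 1$. One could also prove surjectivity of $\per$ by hand, exhibiting for each element of $\mathcal{L}_{i+n-1}$ its $\per$-preimage and checking the relevant $x_1$-exponent is non-negative via Theorem~\ref{thm:elements_of_Li_cap_Bk}, but the counting route above is shorter and reuses exactly the computation just carried out.
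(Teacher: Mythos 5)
Your proof is correct and follows exactly the route the paper intends: the paper states the corollary without proof, calling it a ``straightforward consequence'' of Corollary~\ref{cor:lev_sizes}, and your argument (injectivity of $\per$ into $\mathcal{L}_{i+n-1}$ from the remark after Definition~\ref{def:period}, equality of cardinalities from Corollary~\ref{cor:lev_sizes} via $r_{i+n-1}=r_i$, then finiteness) is precisely the straightforward consequence being invoked.
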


We conclude this section with an explicit example where we highlight the periodic structure of the sequence \(\Set{\mathcal{L}_i}\). 

\begin{example}
	Let \(n=5\) and  \(i\ge 5\).   We list  the
	non-empty sets \(\mathcal{L}_i\cap \mathcal{B}_k \).
	\begin{description}
		\item [\(r_i=1\)]
		\[\mathcal{L}_i\cap \mathcal{B}_5 = \Set{x_1^{i+1}\partial_5};\]
		\item [\(r_i=2\)]  \[
		\begin{split}
			\mathcal{L}_i\cap \mathcal{B}_5 &= \Set{x_1^{i+1}\partial_5,\, x_1^{i-h_i}x_2\partial_5}\\
			\mathcal{L}_i\cap              \mathcal{B}_{4}              &=
			\Set{x_1^{i+1-h_i}\partial_{4}};
		\end{split}
		\]
		\item [\(r_i=3\)]  \[
		\begin{split}
			\mathcal{L}_i\cap \mathcal{B}_5 &= \Set{x_1^{i+1}\partial_5,\, x_1^{i-h_i}x_2\partial_5,\, x_1^{i+1-2h_i}x_2^2\partial_5,\, x_1^{i-2h_i}x_3\partial_5}\\
			\mathcal{L}_i\cap \mathcal{B}_4 &= \Set{x_1^{i+1-h_i}\partial_4,\, x_1^{i-2h_i}x_2\partial_4}\\
			\mathcal{L}_i\cap              \mathcal{B}_{3}              &=
			\Set{x_1^{i+1-2h_i}\partial_{3}};
		\end{split}
		\]
		\item [\(r_i=4\)]  \[
		\begin{split}
			\mathcal{L}_i\cap \mathcal{B}_5 &= \Set{x_1^{i+1}\partial_5,\, x_1^{i-h_i}x_2\partial_5,\, x_1^{i+1-2h_i}x_2^2\partial_5,\, x_1^{i-2h_i}x_3\partial_5,\,\right.\\
				& \ \ \ \ \ \ \ \ \ \ \ \ \ \ \,\left. x_1^{i-2-3h_i}x_2^3\partial_5,\, x_1^{i-1-3h_i}x_2x_3\partial_5,\, x_1^{i-3h_i}x_4\partial_5}\\
			\mathcal{L}_i\cap \mathcal{B}_4 &= \Set{x_1^{i+1-h_1}\partial_4,\, x_1^{i-2h_i}x_2\partial_4,\, x_1^{i+1-3h_i}x_2^2\partial_4,\, x_1^{i-3h_i}x_3\partial_4}\\
			\mathcal{L}_i\cap \mathcal{B}_3 &= \Set{x_1^{i+1-2h_i}\partial_3,\, x_1^{i+1-3h_i}x_2\partial_3}\\
			\mathcal{L}_i\cap              \mathcal{B}_{2}              &=
			\Set{x_1^{i+1-3h_i}\partial_{2}}.
		\end{split}
		\]
	\end{description}
\end{example}

\section{The idealizer chain over the ring of integers}\label{sec:norm}
We conclude the paper by proving that the idealizer chain is generated by the subsets $\mathcal{N}_i$ of Definition~\ref{def:setchain}.
We start by describing the commutator structure of the chain \(\Set{\mathcal{N}_i}_{i\ge -1}\).


%

\begin{lemma}
	If              \(i              <             j\),              then
	\([\mathcal{N}_i,\mathcal{N}_j]             \subseteq             \Z
	\mathcal{N}_{j-1}\cup\Set{0}\).
\end{lemma}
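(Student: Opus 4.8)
The plan is to push everything down to the level function. Take basis elements $x^{\Lambda}\partial_{k}\in\mathcal{N}_{i}$ and $x^{\Theta}\partial_{u}\in\mathcal{N}_{j}$ with $\beta:=[x^{\Lambda}\partial_{k},x^{\Theta}\partial_{u}]\ne 0$; otherwise there is nothing to prove. By the defining bracket formula $\beta$ is an integer multiple of a single basis element, so it suffices to exhibit an index $t\le j-1$ with $\lev_{t}(\beta)\le t$. Using Lemma~\ref{lem:lev_reduction} and Remark~\ref{rem:lev_equlity} I fix the least $a\le i$ with $\lev_{a}(x^{\Lambda}\partial_{k})=a$ and the least $b\le j$ with $\lev_{b}(x^{\Theta}\partial_{u})=b$, and I put $p:=n-1-\WD(x^{\Lambda}\partial_{k})$ and $q:=n-1-\WD(x^{\Theta}\partial_{u})$, both $\ge 0$ by Lemma~\ref{lem:bounds}. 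Substituting Remark~\ref{rem:increase} into Eq.~\eqref{eq:horizontal_operation} then yields, for every $t$, the exact identity
\[
  \lev_{t}(\beta)-t \;=\; r_{a}+r_{b}-r_{t}-(n-1)-(h_{t}-h_{a})p-(h_{t}-h_{b})q ,
\]
so the whole statement reduces to producing one $t\le j-1$ that makes the right-hand side $\le 0$.

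I would first dispose of the degenerate case in which $x^{\Lambda}\partial_{k}$ or $x^{\Theta}\partial_{u}$ equals $\partial_{1}$; recall $\partial_{1}\in\mathcal{N}_{-1}$, so it lies in every $\mathcal{N}_{\ell}$. A one-line computation gives $[\partial_{1},x^{\Theta}\partial_{u}]=-\partial_{1}(x^{\Theta})\partial_{u}$, and when this is nonzero it has the same weight-degree as $x^{\Theta}\partial_{u}$ but one smaller degree, so $\lev_{t}([\partial_{1},x^{\Theta}\partial_{u}])=\lev_{t}(x^{\Theta}\partial_{u})-1$ for all $t$; symmetrically $\lev_{t}([x^{\Lambda}\partial_{k},\partial_{1}])=\lev_{t}(x^{\Lambda}\partial_{k})-1$. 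Combined with $x^{\Theta}\partial_{u}\in\mathcal{N}_{j}$ (respectively $x^{\Lambda}\partial_{k}\in\mathcal{N}_{i}$ and $i<j$) and, when needed, one passage from level $j$ to level $j-1$ via Remark~\ref{rem:increase} (using $\WD(\beta)\ge 0$), this places $\beta$ in $\mathcal{N}_{j-1}$.

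For the main case neither element is $\partial_{1}$, so $p,q\ge 1$ by Lemma~\ref{lem:bounds}; moreover, whenever $h_{b}\ge 1$ one cannot have $r_{b}=1$, since Corollary~\ref{cor:deg_equiv_1_mod_n-1} would then force $x^{\Theta}\partial_{u}=x_{1}^{b+1}\partial_{n}$ and nonvanishing of $\beta$ would force $x^{\Lambda}\partial_{k}=\partial_{1}$. My candidate witness is
\[
  t_{0}:=(M-1)(n-1)+R,\qquad M:=\max(h_{a},h_{b}),\qquad R:=\max\bigl(1,\,r_{a}+r_{b}-(n-1)\bigr),
\]
which satisfies $h_{t_{0}}=M$ and $r_{t_{0}}=R$. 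For this $t_{0}$ the summands $(h_{t_{0}}-h_{a})p$ and $(h_{t_{0}}-h_{b})q$ are $\ge 0$, while $r_{a}+r_{b}-R-(n-1)\le 0$ by the choice of $R$, so the identity gives $\lev_{t_{0}}(\beta)\le t_{0}$. It then remains to check $t_{0}\le j-1$: if $M=h_{a}$ then $t_{0}\le(h_{a}-1)(n-1)+r_{a}=a\le i<j$; if $M=h_{b}>h_{a}$ and $r_{a}\le n-2$ then $t_{0}\le b-1\le j-1$; and if $M=h_{b}>h_{a}$ and $r_{a}=n-1$ then $t_{0}=b$, which is $\le j-1$ unless $b=j$. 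In that single boundary situation $r_{b}\ge 2$, hence $h_{j-1}=h_{j}$ and $r_{j-1}=r_{b}-1$, and evaluating the identity at $t=j-1$ gives $\lev_{j-1}(\beta)-(j-1)=1-(h_{j}-h_{a})p\le 0$ because $p\ge 1$ and $h_{j}>h_{a}$.

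The real difficulty here is combinatorial bookkeeping, not a single hard idea: a short $n=5$ example already shows that the naive witness $t=j-1$ can fail while a strictly smaller index works, so one is forced to commit to a uniform choice such as $t_{0}$ and then fit it under the ceiling $j-1$, which is exactly what produces the small split on whether $\max(h_{a},h_{b})$ is realized by $a$ or by $b$ together with the lone boundary subcase $r_{a}=n-1,\ b=j$; and one must peel off the $\partial_{1}$ brackets at the start because there $p$ or $q$ can vanish and the identity loses its force. Everything else is the linear arithmetic of Definition~\ref{def:indices}, repackaged through Remark~\ref{rem:increase} and Eq.~\eqref{eq:horizontal_operation}.
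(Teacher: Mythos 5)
Your proof is correct, and it takes a genuinely different route from the paper's. The paper tries to bound $\lev_j(\beta)$ directly via Proposition~\ref{prop:lev_wd_equations} and Remark~\ref{rem:increase}, after separating the single case $x^\Theta\partial_u=\partial_1$; that estimate implicitly relies on $\lev_i(x^\Lambda\partial_k)\le i$ and $\lev_j(x^\Theta\partial_u)\le j$, which membership in $\mathcal{N}_i$ (resp.\ $\mathcal{N}_j$) does \emph{not} automatically give (e.g.\ for $n=5$, $x_1x_3\partial_4\in\mathcal{L}_4\subseteq\mathcal{N}_5$ has $\lev_5=7>5$), and it also leaves the sub-case $x^\Lambda\partial_k=\partial_1$ uncovered by the published case split. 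Your argument avoids these pitfalls entirely: after peeling off both $\partial_1$ cases (where the bracket drops $\deg$ by one while fixing $\WD$, so the witness is just $a-1$ or $b-1$), you pass to the \emph{minimal} levels $a,b$, substitute Remark~\ref{rem:increase} into Eq.~\eqref{eq:horizontal_operation} to get the exact identity $\lev_t(\beta)-t=r_a+r_b-r_t-(n-1)-(h_t-h_a)p-(h_t-h_b)q$ (which I verified), and then construct the explicit witness $t_0=(M-1)(n-1)+R$ with $M=\max(h_a,h_b)$, $R=\max(1,r_a+r_b-(n-1))$. The checks that $\lev_{t_0}(\beta)\le t_0$ (since $M\ge h_a,h_b$, $p,q\ge 1$, and $R\ge r_a+r_b-(n-1)$), that $R\le r_a$ gives $t_0\le a\le i<j$ when $M=h_a$, that $r_b\ge 2$ (via Corollary~\ref{cor:deg_equiv_1_mod_n-1} and nonvanishing of $\beta$) forces $t_0\le b-1$ when $M=h_b>h_a$ and $r_a\le n-2$, and the lone boundary subcase $r_a=n-1,\,b=j$ (where $\lev_{j-1}(\beta)-(j-1)=1-(h_j-h_a)p\le 0$) are all sound. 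Your approach buys precision and robustness: by committing to a uniform witness below $j-1$ rather than trying to evaluate at $j$, you both sidestep the uncontrolled growth of $\lev$ past the minimal level and make the bookkeeping with $h,r$ transparent, at the price of a slightly longer case analysis.
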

\begin{proof}
	Let        \(x^\Lambda\partial_k\in       \mathcal{N}_i\)        and
	\(x^\Theta\partial_u\in     \mathcal{N}_j\)     be     such     that
	\([x^\Lambda\partial_{k},  x^\Theta\partial_{u}]  \ne  0\).   Assume
	first     that     \(x^\Theta\partial_u\ne     \partial_1\).      By
	Proposition~\ref{prop:lev_wd_equations} we have
	\begin{align}
		\lev_j([x^\Lambda\partial_{k},          x^\Theta\partial_{u}]) &=
		\lev_j(x^\Lambda\partial_{k})                                  +
		\lev_j(x^\Theta\partial_{u})-h_j(n-1) \nonumber \\
		&=\lev_i(x^\Lambda\partial_{k})                                  +
		\lev_j(x^\Theta\partial_{u})-h_j(n-1)+ \WD(x^\Theta\partial_{u})(h_j-h_i) \nonumber\\
		&< i+j -h_j(n-1) + (n-1)(h_j-h_i) \label{eq:ineq1}\nonumber\\
		&= i+j -h_j(n-1) = i-(n-1)+r_j \le i \le j-1.	\nonumber
	\end{align}
	From             this            we             have            that
	\([x^\Lambda\partial_{k}, x^\Theta\partial_{u}]  \in \mathcal{N}_j\)
	and     so     there     exists     \(h\le     j\)     such     that
	\(\lev_h([x^\Lambda\partial_{k}, x^\Theta\partial_{u}])=h\)  and, by
	the       above      argument,      \(h<j\).       This       yields
	\([x^\Lambda\partial_{k},          x^\Theta\partial_{u}]         \in
	\mathcal{N}_{j-1}\).

	If \(x^\Theta\partial_u=\partial_1\), then
	\begin{align*}
		\lev_i([x^\Lambda\partial_{k},          \partial_{1}])&= h_i(\WD(\partial_1( x^\Lambda)\partial_k))+\deg(\partial_1 (x^\Lambda))-1\\
		&= h_i(\WD(x^\Lambda\partial_k))+\deg(x^\Lambda)-2 = \lev_i(x^\Lambda\partial_k)-1 \le i -1 \le j-1.
	\end{align*}
	By     the      same     argument      as     above      we     have
	\([x^\Lambda\partial_{k},          x^\Theta\partial_{u}]         \in
	\mathcal{N}_{j-1}\).
\end{proof}
As straightforward consequence is the following corollary.
\begin{corollary}
	\(\mathcal{N}_j\subseteq N_{\mathcal{B}}(\Z\mathcal{N}_{j-1})\).
\end{corollary}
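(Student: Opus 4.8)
The plan is to unwind the definition of the idealizer $N_{\mathcal{B}}$ and then quote the lemma that immediately precedes the statement. Recall that $N_{\mathcal{B}}(\Z\mathcal{N}_{j-1})$ is by definition the set of those $b\in\mathcal{B}$ for which $[b,h]\in\Z\mathcal{N}_{j-1}$ holds for every $h\in\mathcal{N}_{j-1}$. So I would fix an arbitrary $b\in\mathcal{N}_j$ and an arbitrary $h\in\mathcal{N}_{j-1}$ and show that $[b,h]\in\Z\mathcal{N}_{j-1}$; since $b$ is arbitrary this gives $\mathcal{N}_j\subseteq N_{\mathcal{B}}(\Z\mathcal{N}_{j-1})$. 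Here one tacitly assumes $j\ge 0$, so that $j-1\ge -1$ and $\mathcal{N}_{j-1}$ is defined; for $j=0$ the assertion reduces to $\mathcal{N}_0\subseteq N_{\mathcal{B}}(\mathfrak{T})$, which is consistent with Eq.~\eqref{def_chain}.

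The key step is a single application of the preceding lemma with the choice $i=j-1<j$: it gives $[\mathcal{N}_{j-1},\mathcal{N}_j]\subseteq\Z\mathcal{N}_{j-1}\cup\Set{0}$, hence $[h,b]\in\Z\mathcal{N}_{j-1}\cup\Set{0}$. By antisymmetry of the Lie bracket, $[b,h]=-[h,b]$, and since $\Z\mathcal{N}_{j-1}$ is closed under negation and contains $0$, we conclude $[b,h]\in\Z\mathcal{N}_{j-1}$, as wanted. Repeating this for all $b\in\mathcal{N}_j$ and $h\in\mathcal{N}_{j-1}$ closes the argument.

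There is no genuine obstacle here: all the real work — splitting into the two cases $x^\Theta\partial_u\ne\partial_1$ and $x^\Theta\partial_u=\partial_1$, and the estimates on $\lev_j$ via Proposition~\ref{prop:lev_wd_equations} — is already carried out in the lemma, and the corollary is only a matter of rereading that statement through the definition of $N_{\mathcal{B}}$. The one minor point worth making explicit is that the lemma deliberately lands in $\Z\mathcal{N}_{j-1}$, which is precisely the module whose idealizer is being computed, so no additional bookkeeping (and, in particular, no containment $\mathcal{N}_{j-1}\subseteq\mathcal{N}_j$) is required.
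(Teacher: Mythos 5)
Your proof is correct and is exactly the "straightforward consequence" the paper has in mind: the corollary is stated without proof in the paper, and the intended argument is precisely to take $i=j-1$ in the preceding lemma and read off the idealizer condition, modulo the trivial antisymmetry step and the observation that $\Z\mathcal{N}_{j-1}\cup\{0\}=\Z\mathcal{N}_{j-1}$. Nothing to add.
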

We prove now the opposite inclusion.

\begin{proposition}\label{prop:basis_normalizers}
	\(\mathcal{N}_j= N_{\mathcal{B}}(\Z\mathcal{N}_{j-1})\).
\end{proposition}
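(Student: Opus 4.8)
The plan is to prove the reverse inclusion $N_{\mathcal{B}}(\Z\mathcal{N}_{j-1}) \subseteq \mathcal{N}_j$ by contraposition: I will take a basis element $x^\Lambda\partial_k \notin \mathcal{N}_j$ and produce an element $x^\Theta\partial_u \in \mathcal{N}_{j-1}$ such that $[x^\Lambda\partial_k, x^\Theta\partial_u] \notin \Z\mathcal{N}_{j-1} \cup \{0\}$, which shows $x^\Lambda\partial_k \notin N_{\mathcal{B}}(\Z\mathcal{N}_{j-1})$. By Remark~\ref{rem:lev_equlity} and Lemma~\ref{lem:lev_reduction}, the hypothesis $x^\Lambda\partial_k \notin \mathcal{N}_j$ means precisely that $\lev_t(x^\Lambda\partial_k) > t$ for all $t \le j$; in particular this forces (via Lemma~\ref{lem:bounds}, contrapositive) $\WD(x^\Lambda\partial_k) \ge n-1$, and then the analysis already carried out in the proof of Lemma~\ref{lem:bounds} restricts the shape of $x^\Lambda\partial_k$ severely — essentially one is in a regime where $\WD$ is large.

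The core of the argument is to choose the test element $x^\Theta\partial_u$ from $\mathcal{N}_{j-1}$ as one of the ``building blocks'' already singled out in the remarks preceding Theorem~\ref{thm:elements_of_Li_cap_Bk}, namely an element of the form $b_u = x_1^{h}x_{u-1}\partial_u$ (or $\partial_1$) lying in a suitable $\mathcal{L}_{h(n-1)} \subseteq \mathcal{N}_{j-1}$, with $u$ chosen so that the bracket $[x^\Lambda\partial_k, b_u]$ is nonzero — this requires $\lambda_u \ne 0$ for some $u < k$, which is exactly guaranteed when $\WD(x^\Lambda\partial_k) > n-k$, and when $\WD(x^\Lambda\partial_k) = n-k$ one instead uses $\partial_1$ against the variable $x_k$ being differentiated. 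Using Eq.~\eqref{eq:horizontal_operation} and Eq.~\eqref{eq:wd_decreasing}, the resulting commutator $x^\Gamma\partial_k$ satisfies $\lev_t(x^\Gamma\partial_k) = \lev_t(x^\Lambda\partial_k) + \lev_t(b_u) - h_t(n-1)$; since $b_u$ is chosen on the ``critical line'' ($\lev_{h(n-1)}(b_u) = h(n-1)$), this level identity lets one show that $\lev_t(x^\Gamma\partial_k) > t$ for all $t \le j-1$ still holds, i.e.\ the commutator is \emph{not} in $\mathcal{N}_{j-1}$. One must check the bracket is a nonzero scalar multiple of a basis element (clear from the definition of the Lie product) so it genuinely fails to lie in $\Z\mathcal{N}_{j-1}\cup\{0\}$.

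The main obstacle I anticipate is the bookkeeping needed to guarantee that the chosen $b_u$ actually lies in $\mathcal{N}_{j-1}$ (not just in some $\mathcal{N}_{j'}$ with $j'$ too large) while simultaneously keeping $[x^\Lambda\partial_k, b_u]$ outside $\mathcal{N}_{j-1}$ — the two requirements pull in opposite directions, and one has to exploit that $\WD(b_u)$ is \emph{small} (so that adding $\lev(b_u)$ pushes the level of the commutator up relative to its index, rather than down) while $\WD(x^\Lambda\partial_k)$ is large. A clean way to organize this is to first reduce to the extreme case $\WD(x^\Lambda\partial_k) = n-1$, $x^\Lambda\partial_k = \partial_1$ is excluded since $\partial_1 \in \mathcal{N}_{-1} \subseteq \mathcal{N}_j$, so by the dichotomy in Lemma~\ref{lem:bounds}'s proof one is reduced to a handful of explicitly describable elements, and then verify the claim for those directly; the general case follows because bracketing with $b_u$ decreases $\WD$ by exactly $1$ (Eq.~\eqref{eq:wd_decreasing}), so one can induct downward on $\WD(x^\Lambda\partial_k)$, descending step by step while staying outside $\mathcal{N}_{j-1}$ at each stage, until the extreme case is reached. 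Combining this inclusion with the preceding Corollary yields the stated equality $\mathcal{N}_j = N_{\mathcal{B}}(\Z\mathcal{N}_{j-1})$.
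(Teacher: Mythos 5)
Your overall strategy (contraposition, test against a well-chosen element of $\mathcal{N}_{j-1}$, use the level identity from Proposition~\ref{prop:lev_wd_equations}) is the same as the paper's, and the family of test elements $b_u$ you pick out is essentially the special case $\ell=u-1$ of the paper's test elements $x_1^{(h_j-1)(u-\ell)}x_\ell\partial_u$. However, there is a genuine gap at the very first reduction: you assert that $x^\Lambda\partial_k\notin\mathcal{N}_j$ (i.e.\ $\lev_t(x^\Lambda\partial_k)>t$ for all $t\le j$) forces $\WD(x^\Lambda\partial_k)\ge n-1$ via the ``contrapositive'' of Lemma~\ref{lem:bounds}. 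That is not what the contrapositive says, and the claim is false. Lemma~\ref{lem:bounds} states that \emph{if} $\lev_i\le i$ for some $i$ then $\WD\le n-1$; its contrapositive gives only that $\WD>n-1$ implies $\lev_i>i$ for all $i$, not the converse. A concrete counterexample to your claim is $x^\Lambda\partial_k=x_1^{j+2}\partial_n$: here $\WD=0$, yet $\lev_t=\deg(x^\Lambda)-1=j+1>t$ for every $t\le j$, so the element lies outside $\mathcal{N}_j$ with the smallest possible weight-degree. An element can escape $\mathcal{N}_j$ not because its $\WD$ is large but because its degree in $x_1$ is too large; your argument does not see this.

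This breaks the rest of the plan. The proposed downward induction on $\WD(x^\Lambda\partial_k)$ has no valid starting point (there is no forced upper bound $n-1$ on $\WD$ in this regime), and the appeal to ``the analysis already carried out in the proof of Lemma~\ref{lem:bounds}'' to restrict the shape of $x^\Lambda\partial_k$ is a misapplication: the dichotomy there ($r_i=n-1$, $\deg\le1$, or $r_i=n-2$, $\deg=0$) is derived \emph{under} the hypothesis $\lev_i\le i$, which is precisely what fails here. In the paper the low-$\WD$ offenders are handled by a separate, explicit branch: for $x^\Lambda\partial_k=x_1^{\lambda_1}\partial_n$ one brackets with $\partial_1\in\mathcal{N}_{-1}$, computes $\lev_i(x_1^{\lambda_1-1}\partial_n)=\lambda_1-2$ directly, and derives a contradiction with $\lev_j(x^\Lambda\partial_k)>j$. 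The remaining elements (those with some $\lambda_u\ne0$, $1<u<n$) are treated via the two-parameter family $x_1^{(h_j-1)(u-\ell)}x_\ell\partial_u$ at level $s=(h_j-1)(n-1)\le j-1$, with a case split on whether $h_j=h_{j-1}$ and on where the critical index $i$ falls relative to $s$; no global bound on $\WD$ is invoked or needed. To repair your proof you would need to drop the false $\WD$ reduction, split instead on whether $\Lambda$ is supported only on $x_1$, and carry out the level-bookkeeping case analysis explicitly rather than by an induction on $\WD$.
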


\begin{proof} 
	By   the    previous   corollary   it   suffices    to   show   that
	\(\mathcal{N}_j\supseteq       N_{\mathcal{B}}(\mathcal{N}_{j-1})\).
	Looking     for     a      contradiction,     we     assume     that
	\(x^\Lambda\partial_k\in   N_{\mathcal{B}}(\mathcal{N}_{j-1})\)   is
	such that \(\lev_i(x^\Lambda\partial_k)> i\)  for all \(i\le j\). Set
	\[x^\Theta\partial_u=:x_1^{(h_{j}-1)(u-\ell)}x_{\ell}\partial_{u},\]
	where \(1 \le  \ell < u\), and \(s:=(h_{j}-1)(n-1)\le  j-1\). We have
	\(h_s=h_{j}-1\) and
	\[
	\lev_s(x^\Theta\partial_u)=(h_{j}-1)(\ell-1+n-u)+(h_{j}-1)(u-\ell)=(h_{j}-1)(n-1)=s.
	\]
	Hence           both            \(x^\Theta\partial_u\)           and
	\([x^\Lambda\partial_k,x^\Theta\partial_u]\)        belong        to
	\(\mathcal{N}_{j-1}\).   Assume that  either \(\lambda_u\ne  0\) for
	some        \(1<u<n\)        or        \(l=k\)        so        that
	\([x^\Lambda\partial_k,x^\Theta\partial_u]\ne    0\).    For    some
	\(i             \le             j-1\)            we             have
	\(\lev_i([x^\Lambda\partial_k,x^\Theta\partial_u])=  i\).  We  start
	with assuming that  \(i>s\) and that \(h_j=h_{j-1}\),  which in turn
	implies \(h_i=h_{j-1}=h_j\). We have
	\[
	\begin{split}
		i&=\lev_i([x^\Lambda\partial_k,x^\Theta\partial_u])\\
		&= \lev_i(x^\Lambda\partial_k)+\lev_i(x^\Theta\partial_u)- h_i(n-1)\\
		&=\lev_j(x^\Lambda\partial_k) + h_{i}(\ell-1+n-u)+	(h_{j}-1)(u-\ell) - h_i(n-1)\\
		&=\lev_j(x^\Lambda\partial_k) + h_{j}(\ell-1+n-u)+	(h_{j}-1)(u-\ell) - h_{j}(n-1)\\
		&= \lev_j(x^\Lambda\partial_k) + (\ell-u) > j + (\ell-u).
	\end{split}
	\]
	As a consequence we  have \(j-1\ge i > j +  (\ell-u)\). Since we may
	alternatively  choose  \(\ell\)  or   \(u\),  with  respect  to  our
	assumption on  \(x^\Theta\partial_u\), such  that \(u-\ell  =1\), we
	have a contradiction.
	
	Suppose  now  that  \(i\le  s\) and  that  \(h_j=h_{j-1}\),  and  so
	\(h_i+1\le h_{j-1}=h_j\). We have
	\[
	\begin{split}
		i&=\lev_i([x^\Lambda\partial_k,x^\Theta\partial_u])\\
		&= \lev_i(x^\Lambda\partial_k)+\lev_i(x^\Theta\partial_u)- h_i(n-1)\\
		&= \lev_i(x^\Lambda\partial_k)+\lev_s(x^\Theta\partial_u) +(h_i-h_s)\WD(x^\Theta\partial_u)- h_i(n-1)\\
		&= \lev_i(x^\Lambda\partial_k)+\lev_s(x^\Theta\partial_u) +(h_i-h_s)(\ell-1+n-u)- h_i(n-1)\\
		&= \lev_i(x^\Lambda\partial_k)+h_s(n-1) +(h_i-h_s)(\ell-1+n-u)- h_i(n-1)\\
		&= \lev_i(x^\Lambda\partial_k) +(h_i-h_s)(\ell-u) \ge \lev_i(x^\Lambda\partial_k), \\
	\end{split}
	\]
	a contradiction.
	
	Suppose  now  that \(j-1=s\)  so  that  \(h_{j-1}=h_s=h_j-1\). By  a
	repeated   use  of   Lemma~\ref{lem:lev_plus_n-1}   we  may   assume
	\(\lev_t([x^\Lambda\partial_k,x^\Theta\partial_u])\le  t\) for  some
	\(t\) such that \(s-(n-1)< t \le s=j-1\). Hence
	\begin{align*}
		t&\ge \lev_t([x^\Lambda\partial_k,x^\Theta\partial_u])\\
		&= \lev_{s}([x^\Lambda\partial_k,x^\Theta\partial_u])\\
		&= \lev_{s}(x^\Lambda\partial_k)+\lev_{s}(x^\Theta\partial_u)- h_{s}(n-1)\\
		&= \lev_{j-1}(x^\Lambda\partial_k) + s -s = \lev_{j-1}(x^\Lambda\partial_k)
	\end{align*}
	giving                       the                       contradiction
	\(\lev_{j-1}(x^\Lambda\partial_k) \le t \le s= j-1\).
	
	We        are        now        left       with        the        case
	\(x^\Lambda\partial_k=   x_1^{\lambda_1}\partial_n\).     In           order           to           have
	\([x^\Lambda\partial_k,x^\Theta\partial_u]\ne   0\),    
	$x^\Theta\partial_u$ must be set to $\partial_1$.            In          particular
	\([x^\Lambda\partial_k,x^\Theta\partial_u]=                \lambda_1
	x_1^{\lambda_1-1}\partial_n\in  \Z   \mathcal{N}_{j-1}\),  and  also
	\(\lev_i(x_1^{\lambda_1-1}\partial_n)=\lambda_1-2 \)  is independent
	of    \(i\).     Thus    for    some   \(i\le    j-1\)    we    have
	\[\lev_{j-1}(x_1^{\lambda_1-1}\partial_n)=\lev_i(x_1^{\lambda_1-1}\partial_n)=\lambda_1-2
	\le            i             \le            j-1.\]             Hence
	\(j      \ge     \lambda_1-1=\lev_j(x^\Lambda\partial_k)>j\),      a
	contradiction.
\end{proof}

We are now able to prove the claimed result on the idealizer chain.

\begin{theorem}\label{thm:main}
	Let \(\mathfrak{M}_i\) be the free \(\Z\)-module spanned by
	\(\mathcal{N}_i\),  for $i \geq -1$.
	Then  \(\mathfrak{M}_i\) is  a
	homogeneous                      subring                      and
	\[\mathfrak{M}_i=N_{\mathfrak{L}(n)}(\mathfrak{M}_{i-1}) = \mathfrak{N}_i.\]
\end{theorem}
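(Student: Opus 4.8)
The plan is to argue by induction on \(i\ge -1\), with Theorem~\ref{thm_homogeneous:normalizers} and Proposition~\ref{prop:basis_normalizers} serving as the two engines and essentially no new computation required. For the base case \(i=-1\), Remark~\ref{rem:lev_equlity} already identifies \(\mathfrak{M}_{-1}=\mathfrak{T}\) with the free \(\Z\)-module spanned by \(\mathcal{N}_{-1}=\{\partial_1,\dots,\partial_n\}\); since \(\mathfrak{T}\) is abelian it is in particular a homogeneous subring, and \(\mathfrak{M}_{-1}=\mathfrak{N}_{-1}\) holds by definition~\eqref{def_chain}.

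For the inductive step, assume that \(\mathfrak{M}_{i-1}\) is a homogeneous subring and that \(\mathfrak{M}_{i-1}=\mathfrak{N}_{i-1}\). Since \(\mathfrak{M}_{i-1}\) is a Lie subring, its idealizer \(N_{\Lie(n)}(\mathfrak{M}_{i-1})\) is again a Lie subring (by the Jacobi identity), and Theorem~\ref{thm_homogeneous:normalizers} identifies it with the homogeneous subring of \(\Lie(n)\) spanned, as a free \(\Z\)-module, by \(N_{\mathcal{B}}(\mathcal{N}_{i-1})\). Proposition~\ref{prop:basis_normalizers} gives \(N_{\mathcal{B}}(\Z\mathcal{N}_{i-1})=\mathcal{N}_i\), so that \(N_{\Lie(n)}(\mathfrak{M}_{i-1})\) is exactly the free \(\Z\)-module spanned by \(\mathcal{N}_i\), that is, \(\mathfrak{M}_i\). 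In particular \(\mathfrak{M}_i\) is a homogeneous subring and \(\mathfrak{M}_i=N_{\Lie(n)}(\mathfrak{M}_{i-1})\). Finally, combining the induction hypothesis with the recursion~\eqref{def_chain}, which reads \(\mathfrak{N}_i=N_{\Lie(n)}(\mathfrak{N}_{i-1})\) for every \(i\ge 0\) (using \(\mathfrak{T}=\mathfrak{N}_{-1}\) for \(i=0\)), we obtain \(\mathfrak{N}_i=N_{\Lie(n)}(\mathfrak{N}_{i-1})=N_{\Lie(n)}(\mathfrak{M}_{i-1})=\mathfrak{M}_i\), which closes the induction.

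The statement is thus almost entirely a bookkeeping step: the combinatorial weight is carried by Proposition~\ref{prop:basis_normalizers} (and the two inclusions feeding it), while Theorem~\ref{thm_homogeneous:normalizers} is what bridges the basis-level idealizer \(N_{\mathcal{B}}(\cdot)\) with the module-level idealizer \(N_{\Lie(n)}(\cdot)\). The one point that requires a little care is ensuring the hypotheses of Theorem~\ref{thm_homogeneous:normalizers} are met at every stage, i.e.\ that \(\mathfrak{M}_{i-1}\) really is a homogeneous subring; this is precisely what the induction propagates, relying on the elementary fact that the idealizer of a Lie subring is a Lie subring. No estimates on \(\lev_i\) or \(\WD\) beyond those already established are needed here.
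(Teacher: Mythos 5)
Your proof is correct and follows the paper's own line of reasoning exactly: the paper's version simply cites Theorem~\ref{thm_homogeneous:normalizers} and Proposition~\ref{prop:basis_normalizers} together with the base observation that \(\mathfrak{T}=\mathfrak{N}_{-1}\) is an abelian homogeneous subring, leaving the induction implicit, whereas you have spelled out the inductive bookkeeping. There is no substantive difference.
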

\begin{proof}
	The         statement          follows         directly         from
	Theorem~\ref{thm_homogeneous:normalizers}                        and
	Proposition~\ref{prop:basis_normalizers}         noting         that
	\(\mathfrak{T}=\Z\partial_1+\dots   +   \Z\partial_n =\mathfrak{N}_{-1}\)  is   an
	abelian homogeneous subring.
\end{proof}

\begin{remark}
	A   straightforward   consequence   of   Lemma~\ref{lem:bounds}   is   that
	\(\Lie{(n)} \ne  \bigcup_{i\ge -1}\mathfrak{N}_i\) for  \(n\ge 3\).
	For example, the element \(x_2^{3}\partial_3\), which  has weight-degree \(n\), cannot
	belong to any of the \(\mathfrak{N}_{i}\)s. We point out that, unlike the case of the Lie ring \(\Lie_{m}(n)\) (\(m>0\)), this shows that \(\Lie(n)\) is not nilpotent, beside not being  finitely generated. 
\end{remark}

We can now conclude the paper with the characterization of the idealizers of Eq.~\eqref{def_chain}.
\begin{theorem}{\label{thm:lev}}
	If                 \(i\ge                  0\),                 then
	\(x^\Lambda\partial_k \in \mathfrak{N}_i\setminus \mathfrak{N}_{i-1}
	\) if and only if \(i\)  is the least non-negative integer such that
	\( i = \lev_i(x^\Lambda\partial_k)
	\).  
\end{theorem}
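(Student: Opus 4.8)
The plan is to reduce the statement to the results already established about the sets $\mathcal{N}_i$, using Theorem~\ref{thm:main} as the bridge between the idealizer chain and the combinatorial chain. First I would observe that by Theorem~\ref{thm:main}, for every $i \ge -1$ the module $\mathfrak{N}_i$ equals $\mathfrak{M}_i$, the free $\Z$-module spanned by $\mathcal{N}_i$; since $\mathcal{N}_i \subseteq \mathcal{B}$ and the chain $\{\mathcal{N}_i\}$ is ascending (Remark~\ref{rem:lev_equlity}), a basis element $x^\Lambda\partial_k$ lies in $\mathfrak{N}_i \setminus \mathfrak{N}_{i-1}$ precisely when $x^\Lambda\partial_k \in \mathcal{N}_i \setminus \mathcal{N}_{i-1} = \mathcal{L}_i$. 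This last point uses that $\mathcal{B}$ is a $\Z$-basis adapted to the homogeneous submodules, so membership of a basis vector in $\mathfrak{M}_i$ is equivalent to its membership in $\mathcal{N}_i$.

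Next I would invoke the characterization of $\mathcal{L}_i$ recorded in Remark~\ref{rem:lev_equlity}: $x^\Lambda\partial_k \in \mathcal{L}_i$ if and only if $i$ is the minimum index $j$ for which $\lev_j(x^\Lambda\partial_k) = j$. Concatenating the two equivalences yields exactly the statement: $x^\Lambda\partial_k \in \mathfrak{N}_i \setminus \mathfrak{N}_{i-1}$ if and only if $i$ is the least non-negative integer with $i = \lev_i(x^\Lambda\partial_k)$. The only small point to address is the case $i = 0$, where $\mathfrak{N}_{-1} = \mathfrak{T}$ is spanned by $\mathcal{N}_{-1} = \{\partial_1,\dots,\partial_n\}$; here one should check that "least non-negative integer" is the right phrasing, i.e.\ that no basis element satisfying $\lev_j = j$ for some $j \le 0$ but not spanned by $\mathcal{N}_{-1}$ slips through — but this is handled automatically because $\mathcal{N}_{-1}$ already consists of the basis elements with $\lev_{-1} = -1$, so $\mathcal{L}_0 = \mathcal{N}_0 \setminus \mathcal{N}_{-1}$ collects precisely those for which $0$ is the minimal such index.

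I do not expect a genuine obstacle here: all the substantive work — identifying the spanning sets $\mathcal{N}_i$ with the idealizers, and the level-function description of $\mathcal{L}_i$ — has already been carried out in Theorem~\ref{thm:main} and Remark~\ref{rem:lev_equlity}. The proof is therefore a short assembly of these facts. If anything requires a line of care, it is making explicit that passing to $\mathfrak{N}_i$ from $\mathcal{N}_i$ preserves the "a basis vector belongs iff it is in the spanning set" property, which follows since distinct elements of $\mathcal{B}$ are $\Z$-linearly independent and each $\mathfrak{N}_i$ is by definition the $\Z$-span of a subset of $\mathcal{B}$.

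\begin{proof}
	By Theorem~\ref{thm:main}, for every $i \ge -1$ we have $\mathfrak{N}_i = \mathfrak{M}_i$, the free $\Z$-module spanned by $\mathcal{N}_i \subseteq \mathcal{B}$. Since $\mathcal{B}$ is a $\Z$-basis of $\Lie(n)$ and each $\mathfrak{N}_i$ is spanned by a subset of $\mathcal{B}$, a basis element $x^\Lambda\partial_k$ lies in $\mathfrak{N}_i$ if and only if $x^\Lambda\partial_k \in \mathcal{N}_i$. As the chain $\{\mathcal{N}_i\}_{i \ge -1}$ is ascending (Remark~\ref{rem:lev_equlity}), it follows that for $i \ge 0$
	\[
	x^\Lambda\partial_k \in \mathfrak{N}_i \setminus \mathfrak{N}_{i-1}
	\quad\Longleftrightarrow\quad
	x^\Lambda\partial_k \in \mathcal{N}_i \setminus \mathcal{N}_{i-1} = \mathcal{L}_i.
	\]
	By Remark~\ref{rem:lev_equlity}, $x^\Lambda\partial_k \in \mathcal{L}_i$ if and only if $i$ is the minimum index $j$ such that $j = \lev_j(x^\Lambda\partial_k)$. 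Since $\mathcal{N}_{-1} = \{\partial_1,\dots,\partial_n\}$ consists exactly of the basis elements with $\lev_{-1} = -1$, for $x^\Lambda\partial_k \in \mathcal{L}_i$ with $i \ge 0$ this minimum is in fact the least \emph{non-negative} integer $i$ with $i = \lev_i(x^\Lambda\partial_k)$. Combining the two equivalences yields the claim.
\end{proof}
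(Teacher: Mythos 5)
Your proof is correct and takes essentially the same approach as the paper: translating membership in $\mathfrak{N}_i\setminus\mathfrak{N}_{i-1}$ to membership in $\mathcal{L}_i$ via Theorem~\ref{thm:main}, then applying the characterization of $\mathcal{L}_i$ from Remark~\ref{rem:lev_equlity}. Your version is more explicit than the paper's one-line proof (which cites only the remark and leaves the $\mathfrak{N}_i \leftrightarrow \mathcal{N}_i$ translation and the non-negativity check implicit), but the logical content is identical.
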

\begin{proof}
The proof follows by noticing that,   by   Remark~\ref{rem:lev_equlity},   we    have   that
	\(x^\Lambda\partial_k \in \mathcal{N}_i\setminus \mathcal{N}_{i-1}\)
	if and  only if \(i\)  is the  least non-negative integer  such that
	\( i = \lev_i(x^\Lambda\partial_k) \).
\end{proof}

A trivial consequence of Corollary~\ref{cor:lev_sizes} is  the following conclusive result.
\begin{corollary}\label{coro:main2}
  For       \(i      >       (n-4)(n-1)\),     the       \(\Z\)-module
  \(\mathfrak{N}_i/\mathfrak{N}_{i-1}\) is free of rank \(c_{r_i-1}\).
\end{corollary}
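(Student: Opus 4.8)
The plan is to assemble Corollary~\ref{coro:main2} directly from the machinery already in place, treating it as a bookkeeping statement rather than a new computation. First I would invoke Theorem~\ref{thm:main}, which identifies $\mathfrak{N}_i$ with the free $\Z$-module $\mathfrak{M}_i$ spanned by $\mathcal{N}_i$. Since $\mathcal{N}_{i-1}\subseteq\mathcal{N}_i$ are subsets of the basis $\mathcal{B}$, the quotient $\mathfrak{M}_i/\mathfrak{M}_{i-1}$ is free with basis the image of $\mathcal{N}_i\setminus\mathcal{N}_{i-1}=\mathcal{L}_i$; hence $\rk\bigl(\mathfrak{N}_i/\mathfrak{N}_{i-1}\bigr)=\Size{\mathcal{L}_i}$.

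Next I would feed in the hypothesis $i>(n-4)(n-1)$, which is exactly the threshold appearing in Corollary~\ref{cor:lev_sizes}; that corollary then gives $\Size{\mathcal{L}_i}=c_{r_i-1}$ outright. Combining the two identities yields $\rk\bigl(\mathfrak{N}_i/\mathfrak{N}_{i-1}\bigr)=c_{r_i-1}$, which is the claim. So the entire proof is the chain
\[
  \rk\bigl(\mathfrak{N}_i/\mathfrak{N}_{i-1}\bigr)
  \;\overset{\text{Thm~\ref{thm:main}}}{=}\;
  \rk\bigl(\mathfrak{M}_i/\mathfrak{M}_{i-1}\bigr)
  \;=\;\Size{\mathcal{L}_i}
  \;\overset{\text{Cor~\ref{cor:lev_sizes}}}{=}\; c_{r_i-1}.
\]

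I do not anticipate a genuine obstacle here, since all the substantive work — the characterization of $\mathcal{L}_i\cap\mathcal{B}_k$ in Theorem~\ref{thm:elements_of_Li_cap_Bk}, the counting argument in Corollary~\ref{cor:lev_sizes}, and the identification of the $\mathcal{N}_i$ with the idealizers in Theorem~\ref{thm:main} — has already been done. The only point requiring a word of care is the passage from ``$\mathcal{N}_{i-1}\subseteq\mathcal{N}_i\subseteq\mathcal{B}$ as sets of basis elements'' to ``the quotient module is free of rank $\Size{\mathcal{N}_i\setminus\mathcal{N}_{i-1}}$'': this is immediate because a free module modulo the submodule spanned by a subset of a basis is free on the complementary basis elements. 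I would state that one line explicitly and then quote the two theorems, keeping the proof to two or three sentences — essentially the displayed chain above together with the remark that the hypothesis on $i$ is precisely what Corollary~\ref{cor:lev_sizes} needs.
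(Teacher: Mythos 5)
Your argument is correct and is precisely the route the paper intends: the paper dismisses this as ``a trivial consequence of Corollary~\ref{cor:lev_sizes},'' relying implicitly on Theorem~\ref{thm:main} to identify $\mathfrak{N}_i$ with the free module on $\mathcal{N}_i$ and on the basic fact that quotienting a free module by the span of a sub-basis leaves a free module on the complement. Your explicit chain of equalities just spells out what the paper leaves unsaid.
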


\bibliographystyle{amsalpha} \bibliography{sym2n_ref.bib}
	
\end{document}